\documentclass{elsarticle}

\usepackage{amssymb, latexsym, comment, amsmath, amsthm, upgreek, epsf, epsfig, color, verbatim, caption, tabularx}
\usepackage{amsfonts}
\usepackage{booktabs}
\usepackage{siunitx}

\usepackage{array}
\newcolumntype{L}[1]{>{\raggedright\let\newline\\\arraybackslash\hspace{0pt}}m{#1}}
\newcolumntype{C}[1]{>{\centering\let\newline\\\arraybackslash\hspace{0pt}}m{#1}}
\newcolumntype{R}[1]{>{\raggedleft\let\newline\\\arraybackslash\hspace{0pt}}m{#1}}

\makeatletter
\def\ps@pprintTitle{%
\let\@oddhead\@empty
\let\@evenhead\@empty
\def\@oddfoot{\centerline{\thepage}}%
\let\@evenfoot\@oddfoot}
\makeatother

\newtheorem{thm}{Theorem}[section]
\newtheorem{lemma}[thm]{Lemma}
\newtheorem{prop}[thm]{Proposition}
\newtheorem{cor}[thm]{Corollary}
\newtheorem{defn}[thm]{Definition}
\newtheorem{rem}[thm]{Remark}
\newtheorem{example}[thm]{Example}

\newcommand{\bs}{\boldsymbol}
\begin{document}

\begin{frontmatter}
\title{Leaf Space Isometries of Singular Riemannian Foliations and Their Spectral Properties}
\author{Ian M.~Adelstein \\ M.~R.~Sandoval} 
\address{Department of Mathematics, Yale University\\ New Haven, CT 06520 United States\\Department of Mathematics, Trinity College\\ Hartford, CT 06106 United States}
\begin{abstract} In this note, the authors show by example that an isometry between leaf spaces of singular Riemannian foliations need not induce an equality of the basic spectra. If the leaf space isometry preserves the mean curvature vector fields, then it is proved that the basic spectra are equivalent,\,i.e.\,that the leaf spaces are isospectral. As a corollary to the main result, the authors identify geometric conditions that ensure preservation of the mean curvature vector fields, and therefore isospectrality of the leaf spaces.

\end{abstract}
\begin{keyword} Spectral geometry, Laplace operator, orbifolds, orbit spaces, group actions
\MSC[2010] 58J50 \sep 58J53 \sep 22D99 \sep 53C12
\end{keyword}
\end{frontmatter}

\section*{Acknowledgements}
The authors would like to thank Carolyn Gordon for many helpful conversations throughout the course of this project, as well as Emilio Lauret and Marco Radeschi for providing valuable feedback. The authors would also like to thank the reviewers for many helpful comments, including suggesting a much shorter, more elegant proof of the main corollary, and to acknowledge the support of the National Science Foundation, Grant DMS-1632786.

\begin{section}{Introduction} 

Given a singular Riemannian foliation with closed leaves on a  compact Riemannian manifold $M$, we consider the spectral geometry of the leaf space $M/\mathcal{F}$. More precisely, we are interested in the $\mathcal{F}$-basic spectrum--the spectrum of the Laplacian on $M$ restricted to the smooth functions that are constant on the leaves of the foliation. Let $C^{\infty}_B(M,\mathcal{F})$ denote the space of such functions. The leaf space $M/\mathcal{F},$ which may be quite singular, has a natural ``smooth" structure given by the algebra $C^{\infty}(M/\mathcal{F})$ consisting of functions $f \colon M/\mathcal{F} \to \mathbb{R}$ whose pullback via $\pi \colon M \to M/\mathcal{F}$ are smooth $\mathcal{F}$-basic functions on $M$, i.e.~$f \in C^{\infty}(M/\mathcal{F})$ if and only if $\pi^* f \in C^{\infty}_B(M,\mathcal{F})$. (See Section 2 for a more detailed definition.) A map $\varphi \colon M_1/\mathcal{F}_1 \to M_2/\mathcal{F}_2$ is said to be \emph{smooth} if the pullback of every smooth function on $M_2/\mathcal{F}_2$ is a smooth function on $M_1/\mathcal{F}_1$, i.e.~$\varphi^*f  \in C^{\infty}_B(M_1/\mathcal{F}_1)$ for every $f \in C^{\infty}(M_2/\mathcal{F}_2)$. The following notion of smooth isometry of leaf spaces using this idea of smoothness has recently appeared in the literature relating to singular Riemannian foliations (SRF):

\begin{defn}\label{d:srfiso} A map $\varphi \colon M_1/\mathcal{F}_1 \to M_2/\mathcal{F}_2$ is said to be a {\bf smooth SRF isometry} if it is an isometry of metric spaces that is smooth (in the above sense) with smooth inverse. 
\end{defn}

Given the above notions of a smooth SRF isometry between leaf spaces and ``smooth" structures on leaf spaces, it is natural to ask if these generalizations to possibly quite singular leaf spaces have the same properties as the corresponding structures on smooth manifolds. In particular, one can ask if the existence of such a smooth SRF isometry between leaf spaces $M_1/\mathcal{F}_1$ and $M_2/\mathcal{F}_2$ implies equivalence of the $\mathcal{F}_i$-basic spectra for $i=1, 2$. As we shall see in the following example, the existence of a smooth SRF isometry of the leaf space is not sufficient to guarantee that $spec_B(M_1,\mathcal{F}_1)=spec_B(M_2,\mathcal{F}_2)$.
\begin{example}\label{ex1} \normalfont Let $M= S^2$ be the round 2-sphere with $G=SO(2) \leq \text{Isom}(M)$ where $G$ acts by rotation around an axis. The orbit space of such an action defines a singular Riemannian foliation with the orbits as leaves. By Theorem 1.3 of \cite{AL2011}, the identity map on the orbit space is a smooth SRF isometry between the quotient space $M/G$ and the orbifold $\mathcal{O}=[0, \pi]$ given by the standard association of orbits to points. The Neumann (orbifold) spectrum of $\mathcal{O}$ is known to be $\text{spec}(\mathcal{O}) = \{  0, 1, 4, \ldots, k^2, \ldots \}$. The eigenvalues of the $n$-sphere are given by $k(k+n-1)$ so that the $G$-invariant spectrum of $M$ is a subset of $\text{spec}(M) = \{ 0, 2, 6, \ldots, k(k+1) , \ldots \}$ where we have suppressed the multiplicity of eigenvalues. It follows immediately that the smooth SRF isometry between $M/G$ and $\mathcal{O}=[0, \pi]$ does not induce an equality of the basic spectra.
\end{example}

We note that if the leaf spaces $M_1/\mathcal{F}_1$ and $M_2/\mathcal{F}_2$ have the structure of $n$-dimensional orbifolds\footnote{Here, a leaf space $M/\mathcal{F}$ has the structure of an orbifold if for every $p\in M,$ there exists an open $U$ neighborhood of $p$ such that the local quotient $U/\mathcal{F}$ is a Riemannian orbifold. This is equivalent to $(M,\mathcal{F})$ being {\it infinitesimally polar}, see Theorem 1.4 of \cite{LT2010} for more detail.}, $\mathcal{O}_1$ and $\mathcal{O}_2,$ respectively, then we have two different representations of each leaf space: the frame bundle representation $\mathcal{O}_i=\mathcal{F}r(\mathcal{O}_i)/O(n)$, and the original one, $\mathcal{O}_i=M_i/\mathcal{F}_i$, for $i=1,\,  2$. The frame bundle representation is rather special because any orbifold isometry, $\varphi:\mathcal{O}_1\rightarrow\mathcal{O}_2$ automatically lifts to an isometry of the corresponding orthonormal frame bundles $\mathcal{F}r(\mathcal{O}_1)\rightarrow\mathcal{F}r(\mathcal{O}_2),$ as described in Section 4 of \cite{BZ2007}. Furthermore, this isometry of the frame bundles is actually a foliated diffeormorphism between the frame bundles, i.e.~it sends the leaves to leaves (which are the orbits under the natural $O(n)$ action). The existence of such a foliated diffeomorphism on the frame bundles is sufficient to guarantee equality of the orbifold spectra. 

In the more general setting of arbitrary quotients, no such foliated diffeomorphism between $M_1$ and $M_2$ is guaranteed by the existence of a smooth SRF isometry $\varphi:M_1/\mathcal{F}_1\rightarrow  M_2/\mathcal{F}_2$. Indeed, the question of the existence of such a foliated isometry is highly non-trivial. If one were to seek to generalize the situation with the frame bundle representation to more general leaf spaces of singular Riemannian foliations then the analogous assumption of the existence of a leaf-preserving isometry between the ambient spaces $M_1$ and $M_2$ is certainly sufficient but overly strong. In this paper, we explore conditions under which we can guarantee that $spec_B(M_1,\mathcal{F}_1)=spec_B(M_2,\mathcal{F}_2).$ The main result of this paper shows that, given a smooth SRF isometry between leaf spaces, $\mathcal{F}_i$-basic isospectrality can be assured under less stringent conditions in special cases.

Recall the following definition from the theory of singular Riemannian foliations: a vector field $X$ on $(M,\mathcal{F})$ is said to be {\it basic} if $X_p$ takes values in the normal space to the leaves, $\nu_p L_p,$ for all $p$ in its domain, and if, in local distinguished coordinates, the coefficient functions are basic functions. Such functions project in a well-defined way to the quotient leaf space, $M/\mathcal{F}$. We denote this projection by $X_*$. In what follows, we will be particularly interested in the mean curvature vector field of the leaves, defined over the region of $(M,\mathcal{F})$ where the leaves are of maximal dimension. This region is known as the regular region, and it is open and dense in $M$. If $H$ is basic, then $(M,\mathcal{F})$ is said to be a {\it generalized isoparametric} singular Riemannian foliation. (See, for example, \cite{Pac} or \cite{AR2016b} for definitions and results relating to basic mean curvature vector fields.)

\begin{thm}\label{main} Let $(M_1, \mathcal{F}_1)$ and $(M_2, \mathcal{F}_2)$ be two singular Riemannian foliations with mean curvature vector fields $H_1$ and $H_2$, respectively, and let $\varphi \colon M_1/\mathcal{F}_1 \to M_2/\mathcal{F}_2$ be a smooth SRF isometry satisfying the following two conditions:  (1) $H_1$ and $H_2$ are basic, and  (2) $d\varphi(H_{1*})=H_{2*}.$ Then the leaf spaces are isospectral, i.e.~$spec_B(M_1,\mathcal{F}_1)=spec_B(M_2,\mathcal{F}_2).$ 
\end{thm}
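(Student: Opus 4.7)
The plan is to use the pullback $\varphi^{*} \colon C^{\infty}(M_2/\mathcal{F}_2) \to C^{\infty}(M_1/\mathcal{F}_1)$ as an explicit intertwiner between the basic Laplacians $\Delta_{B,2}$ and $\Delta_{B,1}$. By Definition~\ref{d:srfiso}, $\varphi^{*}$ is a bijection between the algebras of smooth basic functions on the two leaf spaces. Since the basic Laplacian on a compact singular Riemannian foliation has discrete spectrum with smooth basic eigenfunctions forming a complete basis of $L^{2}_{B}$, producing such an intertwiner immediately yields equality of the spectra with multiplicity.

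The key analytic ingredient is the standard decomposition of the basic Laplacian on a foliation whose mean curvature is basic: on the regular region one has
\[
\Delta_{B} f = \Delta f - H(f),
\]
or equivalently $\Delta_{B} f = \Delta^{T} f$, the Laplacian of the induced transverse (orbifold) metric on the regular part of the leaf space. I would therefore verify two separate intertwinings. First, because $\varphi$ is a metric-space isometry that is smooth with smooth inverse, its restriction to the regular stratum of the leaf space should be an honest Riemannian isometry between the induced transverse structures (a Myers--Steenrod-type argument on the smooth locus), so $\varphi^{*}$ intertwines $\Delta^{T}_{2}$ and $\Delta^{T}_{1}$ on basic functions supported on the regular region. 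Second, the hypothesis $d\varphi(H_{1*}) = H_{2*}$ translates directly into $H_{1}(\varphi^{*}f) = \varphi^{*}(H_{2}f)$ for every smooth basic $f$, which handles the first-order term.

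Combining the two intertwinings yields $\varphi^{*} \Delta_{B,2} = \Delta_{B,1} \varphi^{*}$ on the regular region, and this identity extends to all smooth basic functions by density of the regular stratum in $M$ together with smoothness of both sides. The spectral conclusion is then standard: an eigenfunction of $\Delta_{B,2}$ with eigenvalue $\lambda$ pulls back under $\varphi^{*}$ to an eigenfunction of $\Delta_{B,1}$ with the same eigenvalue, and vice versa via $(\varphi^{-1})^{*}$, producing a bijection of eigenspaces that preserves multiplicity.

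The principal obstacle I anticipate is the Myers--Steenrod step: promoting $\varphi$, known only as a metric-space isometry that preserves the algebra $C^{\infty}(M/\mathcal{F})$, to a genuine Riemannian isometry of the transverse structures on the regular stratum, so that the transverse Laplacians are intertwined in the required sense. Once this geometric step is in place, the remaining work is a direct operator calculation combined with the spectral theorem for the basic Laplacian on a compact SRF.
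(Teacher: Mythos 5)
Your proposal takes essentially the same approach as the paper: decompose the Laplacian on the regular region into a transverse Laplacian plus a mean-curvature term, show $\varphi^*$ intertwines the transverse Laplacians because $\varphi$ restricts to a Riemannian isometry of the orbifold regular parts, use hypothesis (2) for the first-order term, and then extend by density and continuity. The one step you flag as a potential obstacle---promoting $\varphi$ to a Riemannian isometry of the transverse structures on the regular stratum---is exactly what the paper invokes from the proof of Proposition 3.5 of \cite{AR2015}, so your outline is complete once that reference is in hand.
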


\begin{rem}
\normalfont If either representation has only minimal leaves, then so must the other, and by the result of \cite{MiWol2006}, the representations of the leaf spaces must arise from regular Riemannian foliations. In that case the leaf space is either a manifold or an orbifold, and the associated basic spectrum is the spectrum of the Laplacian on functions on the quotient manifold/orbifold. 
\end{rem}

\begin{rem}
\normalfont In addition to the minimal case, the hypotheses of the main theorem can be shown to be satisfied by the isometry constructed by K. Richardson in \cite{Ri2} between the leaf space of an SRF defined by the closure of a regular Riemannian foliation with irregular closure and the orbit space of an isometric group action on a related manifold, defined via a suspension.
\end{rem}
The proof of Theorem \ref{main} is rather straightforward. Our main results also include the following corollary which yields isospectrality in a special case that guarantees condition (2) above.

\begin{cor}\label{cor:spaceforms}
If $M_1$ and $M_2$ are compact space forms of the same non-negative curvature $\kappa$ that admit singular Riemannian foliations $\mathcal{F}_1$ and $\mathcal{F}_2,$ respectively, and $\varphi:M_1/\mathcal{F}_1 \rightarrow M_2/\mathcal{F}_2$ is a smooth SRF isometry that preserves the codimensions of corresponding leaves, then $spec_B(M_1,\mathcal{F}_1)=spec_B(M_2,\mathcal{F}_2).$ 

\end{cor}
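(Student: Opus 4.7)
My plan is to apply Theorem~\ref{main} to $\varphi$, which requires verifying two conditions: (1) that the mean curvature vector fields $H_1$ and $H_2$ are basic, and (2) that $d\varphi(H_{1*}) = H_{2*}$.

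For (1), I would invoke the known fact that a singular Riemannian foliation on a compact space form of non-negative curvature is generalized isoparametric, so that each $H_i$ is automatically basic. This follows from the equifocal structure of such foliations together with the Riemannian submersion structure on the regular stratum, and can be cited from the existing literature on SRFs in constant curvature (for instance, via the Pacini or Alexandrino--Radeschi references already present in the paper).

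For (2), the plan is to give an intrinsic description of $H_{i*}$ on the regular stratum and then exploit the codimension-preserving isometry. On the regular stratum, the projected mean curvature vector field equals the negative logarithmic gradient of the basic leaf-volume function $v_i$ descended to $M_i/\mathcal{F}_i$. In a space form of constant curvature $\kappa$, the Jacobi equation along geodesics transverse to the leaves has universal solutions determined by $\kappa$, and the resulting Jacobian of the normal exponential map depends on the extrinsic data of the leaves only through the leaf codimension and intrinsic leaf-space quantities. Because $\varphi$ is a metric isometry of leaf spaces and preserves codimensions, the pullback $\varphi^{*}\log v_2$ and $\log v_1$ must then differ by a locally constant function, and taking gradients yields $d\varphi(H_{1*}) = H_{2*}$. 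Theorem~\ref{main} then delivers $\mathrm{spec}_B(M_1,\mathcal{F}_1) = \mathrm{spec}_B(M_2,\mathcal{F}_2)$.

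The main obstacle I anticipate is justifying the intrinsic dependence claim in (2): a priori the second fundamental form of the leaves enters into $v_i$, and it is not obvious that this extrinsic information is recoverable from leaf-space data, the codimension, and $\kappa$ alone. In constant curvature, however, the equifocality of the regular leaves combined with the explicit form of the Jacobi equation (equivalently, Weyl's tube formula) should reduce the dependence to precisely what is claimed; finding a clean way to organize this---perhaps matching the ``much shorter, more elegant'' reviewer proof mentioned in the acknowledgements---is the technical heart of the corollary.
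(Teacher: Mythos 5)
Your high-level strategy matches the paper: apply Theorem~\ref{main}, handle condition (1) by citing that SRFs in non-negatively curved space forms are generalized isoparametric, and then do the real work in condition (2). You have also correctly identified where the mathematical content lies---showing that the leaf-space isometry forces $d\varphi(H_{1*})=H_{2*}$---but your argument for that step is a heuristic, not a proof, and you say so yourself. The claim that the normal exponential Jacobian (equivalently the leaf-volume density $v_i$) ``depends on the extrinsic data of the leaves only through the leaf codimension and intrinsic leaf-space quantities'' is exactly what needs to be established, and it is not obvious: a priori $v_i$ encodes the full second fundamental form of the leaves, and an isometry of leaf spaces does not directly see that tensor. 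Saying that $\varphi^*\log v_2-\log v_1$ ``must then'' be locally constant is the conclusion you want, asserted rather than derived.

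The paper closes this gap by a different, more structural route. First it treats $\kappa=0$ with $M_i=\mathbb{R}^{n_i}$ directly by citing Proposition~3.1 of Alexandrino--Radeschi, which shows that a codimension-preserving leaf-space isometry between Euclidean SRFs preserves projected basic vector fields, in particular $H_*$; the mechanism there (and in the Appendix's alternative proof) is that the nonzero eigenvalues of the shape operator, with multiplicities, can be read off from conjugate points of projected horizontal geodesics in the quotient, and conjugate points are manifestly preserved by a leaf-space isometry. Second, for $\kappa>0$ with $M_i=\mathbb{S}^{n_i}_r$, it cones the spherical foliation to a Euclidean foliation of $\mathbb{R}^{n_i+1}$ (Lemma~\ref{lemmaA}), applies the Euclidean result there, and then relates $H'_i$ to $H_i$ in spherical coordinates. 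Third, for genuine quotients $M_i=\mathbb{S}^{n_i}_r/\Gamma_i$ or $\mathbb{R}^{n_i}/\Gamma_i$ it uses the covering-space Lemma~\ref{cor:coveringspace} to lift $\varphi$ to a smooth SRF isometry between the universal-cover leaf spaces and reduces to the previous cases. Your proposal mentions none of these three reductions (to Euclidean space via coning, to simply connected covers, and to the Alexandrino--Radeschi result), and without some such concrete mechanism the ``intrinsic dependence'' claim remains a conjecture rather than a step. So while your intuition about Jacobi fields and equifocality in constant curvature is aimed in the right direction---it is in fact the engine behind Proposition~3.1 of Alexandrino--Radeschi and behind the paper's appendix---your write-up has a genuine gap precisely at the point you flagged.
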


\begin{rem}
\normalfont The corollary above implies the isospectrality result for compact manifolds of constant non-negative curvature. In the case of ambient manifolds of negative curvature, constant or otherwise, it is known that such manifolds admit no non-trivial Riemannian foliations, either regular or singular (due to the work of Zhegib in \cite{Zhegib}, and Lytchak in \cite{Lytchak}, respectively).  This accounts for all possible compact space forms.
\end{rem}

We also have the following, in the case the leaf space has no topological boundary.

\begin{rem}
\normalfont In Corollary \ref{cor:spaceforms}, if the quotient space has no topological boundary in the sense of \cite{AR2015}, then the condition on the preservation of leaf codimensions can be dropped. As discussed in Section 2.1 of \cite{AR2015}, the condition that $M/\mathcal{F}$ has no boundary is equivalent to the condition that the quotient codimension\footnote{See Section 3.2 for definition.} of every singular stratum is greater than one. 

\end{rem}

Finally, we have two further corollaries of the main theorem, which relate to the special case of isometric group actions. These are stated and proved in Section 3.

\begin{rem}
\normalfont If the orbit spaces $M_1/\mathcal{F}_1=:\mathcal{O}_1$ and $M_2/\mathcal{F}_2=:\mathcal{O}_2$ have the structure of Riemannian orbifolds, then Definition \ref{d:srfiso} corresponds to the usual notion of a smooth isometry between Riemannian orbifolds in the sense of preserving the sheaf of smooth functions on the orbifold. Thus, if $\varphi: M_1/\mathcal{F} _1\rightarrow M_2/\mathcal{F}_2$ is a smooth SRF isometry satisfying the hypotheses of Theorem \ref{main}, then $spec_B(M_1,\mathcal{F}_1)=spec_B(M_2,\mathcal{F}_2).$ The $O(n)$-invariant spectra of the frame bundles $\mathcal{F}r(\mathcal{O}_1)$ and $\mathcal{F}r(\mathcal{O}_2)$ will also be the same, by virtue of the fact that such isometries lift to the associated frame bundles, as noted earlier. However, unless there is an additional isometry between $M_i/\mathcal{F}_i$ and $\mathcal{F}r(\mathcal{O}_i)/O(n)$ for at least one of $i=1,2$ that satisfies the hypotheses of Theorem \ref{main} or its corollaries, then there is no guarantee that the $\mathcal{F}_i$-basic spectra are equal to the orbifold spectra of $\mathcal{O}_i$.
\end{rem}

The main results follow similar work by M. Alexandrino and M. Radeschi in \cite{AR2015}. In particular, we note that some nice consequences follow from Theorem 1.1 of \cite{AR2015}, which states that a metric space isometry between leaf spaces that preserves the leaf codimensions must be smooth. Thus, one may drop the adjective ``smooth" from any SRF isometry that preserves the leaf codimensions from Corollary \ref{cor:spaceforms}. In what follows, an SRF isometry satisfying this hypothesis will be understood to be smooth.

The paper proceeds as follows. We discuss in more detail the notion of a smooth SRF isometry and its properties in Section~\ref{smooth}. We also prove Theorem~\ref{main} and discuss the obstacle to isospectrality when a smooth SRF isometry exists between leaf spaces. Finally, we prove the main corollary concerning space forms.  In Section~\ref{consequences}, we discuss applications of Theorem \ref{main} to isometric group actions. We also make note of some properties of SRF isometries. In the Appendix, we have also included an alternative proof of Corollary \ref{cor:spaceforms} using eigenvalues of the shape operator, Jacobi fields, and some special properties of foliations on spheres.

\end{section}

\begin{section}{Singular Riemannian Foliations and Smooth Isometries}\label{smooth}
\begin{subsection}{Preliminaries}
We recall some of the terminology of singular Riemannian foliations, \cite{Mol}.
\begin{defn}\label{SRF}
A {\it singular Riemannian foliation} on an ambient manifold $M$ is a partition $\mathcal{F}$ of $M$ by connected immersed submanifolds, known as the leaves, that satisfy the following two conditions:
\begin{enumerate}
\item The module $\Xi_{\mathcal{F}}$ of smooth vector fields that are tangent to the leaves is transitive on each leaf in the sense that there exist a collection of smooth vector fields $X_i$ on $M$ such that for each $x\in M$ the tangent space to the leaf $L_x$ through $x$ is spanned by the vectors $X_i$. Note that the dimension of the leaves may vary over the manifold.
\item There exists a Riemannian metric $g$ on $M$ that is adapted to $\mathcal{F}$ in the sense that every geodesic that is perpendicular at one point to a leaf remains perpendicular to every leaf that intersects that geodesic. In other words, the normal distribution to the leaves is totally geodesic.
\end{enumerate}
\end{defn}
\noindent Examples: 
\begin{enumerate}
\item Given a compact Lie group acting on a compact manifold $M$ by isometries, the partition of $M$ into orbits defines a singular Riemannian foliation whose leaf space is the orbit space of the action. Furthermore, these singular Riemannian foliations are generalized isoparametric foliations, by a result of Pacini, \cite{Pac}, and thus satisfy condition (1) of the main theorem.
\item As a special case of the above, an orbifold $\mathcal{O}$ of dimension $n$ can always be represented as the leaf space of the orthonormal frame bundle $\mathcal{F}r(\mathcal{O})$ with the leaves being the orbits under the natural $O(n)$ action. Because the isotropy subgroups are finite, this singular Riemannian foliation is actually a {\it regular Riemannian foliation} in that all the leaves (orbits) are of the same dimension.
\item Given a Riemannian foliation $(M,\mathcal{F})$, the partition of $M$ into leaf closures defines a singular Riemannian foliation when the dimension of leaf closures is not necessarily constant.
\end{enumerate}

In the above situations, the examples produce (possibly) singular Riemannian foliations with closed leaves. In general, there is no requirement that a singular Riemannian foliation should have closed leaves. However, when the leaves are closed, one can define a notion of a smooth structure on $M/\mathcal{F}$ as well as a natural metric structure that is inherited from $M$. Note that it has recently been proved in \cite{AR2017} that the closure of a singular Riemannian foliation is again a singular Riemannian foliation, settling a long-standing conjecture due to Molino. Hence, if one has a singular Riemannian foliation whose leaves are not all closed one may always consider its closure instead.

\begin{defn}\label{d:smoothiso}
We define a ``smooth" structure on $M/\mathcal{F}$ to be the algebra $C^\infty(M/\mathcal{F})$ consisting of functions $f: M/\mathcal{F}\rightarrow \mathbb{R}$ whose pullback via $\pi: M\rightarrow M/\mathcal{F}$ is a smooth basic function on $M.$ Note that the ``smooth" structure of a possibly non-closed leaf space $M/\mathcal{F}$ will be the same as that of $M/\overline{\mathcal{F}}.$ In the situation when the singular Riemannian foliation arises from an isometric group action by a compact Lie group, we can rephrase this as follows.  A smooth structure on $M/G$ is the algebra of functions $C^\infty(M/G)$ consisting of functions $f: M/G\rightarrow \mathbb{R}$ whose pullback via $\pi: M\rightarrow M/G$ is a smooth $G$-invariant function on $M$. Thus, if $U$ is an open set in the topological space $M/G,$ with the quotient topology,  then smooth structure is given by the sheaf of smooth functions $C^\infty(U):=(C^\infty(\pi^{-1}(U)))^G.$
\end{defn}

We also recall the notion of singular Riemannian foliations with disconnected leaves, from Section 2 of \cite{AR2015}. At times in the study of singular Riemannian foliations, it is natural to consider singular Riemannian foliations with disconnected leaves. For example, the lift of a foliation $(M,\mathcal{F})$ by a surjective map $p:N\rightarrow M$ produces a foliation whose leaves are the pre-images of the leaves of $\mathcal{F}$, but these pre-images need not be connected. One can define the lifted foliation as the connected components of the pre-images, but this is not always desirable. A singular Riemannian foliation with disconnected leaves is defined as the triple $(M,\mathcal{F}^0, \Gamma)$ where $(M,\mathcal{F}^0)$ is a singular Riemannian foliation with connected leaves, and $\Gamma$ is a discrete group of isometries of the leaf space $M/\mathcal{F}^0$. The action of $\Gamma$ on the leaf space extends naturally to an action on the leaves of $\mathcal{F}^0,$ with the disconnected leaves defined to be the orbits under this action, $\Gamma\cdot L_p$ for $L_p\in \mathcal{F}^0$. The notions of isometry between the leaf spaces of singular Riemannian foliations and the mean curvature vector field extend to singular Riemannian foliations with disconnected leaves. However, some care should be taken with the definition of the principal leaves. A leaf $L_p$ of a disconnected foliation is said to be a {\it principal leaf} if it satisfies the following two conditions:  (1) each connected compeonent of $L_p$ is a principal leaf of $\mathcal{F}^0$ in the usual sense, i.e. the dimension of $L_p$ is maximal, and it has trivial holonomy; and (2) if there exists an isometry $\gamma \in\Gamma$ that fixes any component of $L_p$, then $\gamma$ must be the identity in $\Gamma.$


In what follows, we will occasionally make use of the natural stratification of the ambient space $M$. Recall from the theory of Riemannian foliations, \cite{Mol}, that one can define a stratification of $M$ as follows. Let $\Sigma_k$ denote the union of leaves of dimension $k$. Then for each $k$, $\Sigma_k$ is a weakly embedded submanifold. When $k$ is maximal, the corresponding stratum is usually denoted by $M_{reg}$, which is an open, dense subset of $M$, whose quotient $M_{reg}/\mathcal{F}$ is at worst a Riemannian orbifold. 
\end{subsection}

\begin{subsection}{The proofs of the main results}\label{proofs}
We first review some of the local geometry of singular Riemannian foliations. While the leaf space of a singular Riemannian foliation is an example of an Aleksandrov space, the metric structure is much stronger than the more general case of an Aleksandrov space because the metric structure on the quotient comes from a smooth metric structure on the manifold $M$ that is adapted to the foliation, (see \cite{Mol} for definitions). In fact, the distance between points in the quotient is realized by the lengths of orthogonal geodesics connecting the leaves containing the preimages of the points in the quotient. The set $M_{reg}$ admits a regular Riemannian foliation, and thus a transverse metric $g^T,$ such that $\pi |_{M_{reg}}: M_{reg}\rightarrow M_{reg}/\mathcal{F}$ has as its image a Riemannian orbifold $B$ whose metric $g_B$ is isometric to $g^T$ in the sense that $g^T=\pi^*g_B$. This induces the following relationship between the local Laplacian on $M$ and the Laplacian on the quotient orbifold $B$ in terms of the mean curvature vector field $H$ (or equivalently, its dual, the mean curvature form) over a neighborhood $U$ contained within the regular region (see standard references such as \cite{GW2009}):
\begin{equation}\label{laplacians}
\Delta_{U}f=\Delta_{B}f-g(\nabla f , H_*),
\end{equation}
where $f$ is a smooth basic function, and thus defines a function on $B$ that we also denote by $f$. We note that if $f$ is basic and the mean curvature vector field over the regular region is basic, then the last term in the right-hand side of  \eqref{laplacians} also defines a basic function.

\begin{proof}[Proof of Theorem \ref{main}.]

Recalling that the union of the leaves of maximal dimension is the regular part of $M_i$ for $i=1, 2$, let $p_1$ (respectively $p_2$) be in the regular part of $M_1$, (respectively $M_2$). The corresponding leaf spaces are at worst orbifolds and let  $B_i$ denote the local model for these orbifolds $(M_i)_{reg}/\mathcal{F}_i$ for $i=1, 2$.  Suppose for each $i=1,\,2$ there is a neighborhood $U_i$ of $p_i$ such that $\pi_i(U_i)\subset (M_i)_{reg}/\mathcal{F}_i.$ By shrinking these neighborhoods if necessary, we may assume that $\varphi(\pi_1(p_1))=\pi_2(p_2)$ and also that $\varphi(\pi_1(U_1))=\pi_2(U_2).$ Note that $\varphi$ restricts to an isometry from $\pi_1(U_1)$ to $\pi_2(U_2)$.

For each $i=1,\,2$
\begin{align}
\Delta_{U_1}f_1&=\Delta_{B_1}f_1-g_1(\nabla f_1, H_{1*}),\label{ub1}\\
\Delta_{U_2}f_2&=\Delta_{B_2}f_2-g_2(\nabla f_2, H_{2*}),\label{ub2}
\end{align}
for $f_1$ an $\mathcal{F}_1$-basic function which defines a function in $C^\infty(B_1),$ also denoted by $f_1$, and similarly for  $f_2$ an $\mathcal{F}_2$-basic function and its corresponding function in $C^\infty(B_2),$ as in \eqref{laplacians}. Let $\Delta _1$ denote the operator on $\pi_1\bigl((M_1)_{reg}\bigr)$ on the right-hand side of \eqref{ub1}, and let $\Delta_2$ be defined similarly for the right-hand side of \eqref{ub2}.

Let $f_1=\varphi^*f_2.$ As noted in the proof of Propositon 3.5 of \cite{AR2015}, because $\varphi$ lifts to an isometry which we also call $\varphi$ from $B_1$ to $B_2$, we have
\begin{equation}\label{orbisom}
\varphi^*\Delta_{B_2}f_2=\Delta_{B_1}\varphi^*f_2=\Delta_{B_1}f_1.
\end{equation}

Since $d\pi_2(H_2)=H_{2*}$ is basic, every function in \eqref{ub2} defines a function on the quotient, so we may pull them back via $\varphi.$ Applying $\varphi^*$ to both sides of \eqref{ub2}, and using \eqref{orbisom} yields:
\begin{align}
\varphi^*\Delta_{U_2}f_2&=\varphi^*\Delta_{B_2}f_2-\varphi^*\bigl(g_2(\nabla f_2, H_{2*})\bigr)\\
&=\Delta_{B_1}\varphi^*f_2-\varphi^*\bigl(g_2(\nabla f_2, H_{2*})\bigr).
\end{align}
But then observe that since $\varphi_*(H_{1*})=H_{2*},$ and $\varphi^*g_2=g_1$ we have
\begin{equation}
\varphi^*\bigl(g_2(\nabla f_2, H_{2*})\bigr)=g_1(\nabla f_1, H_{1*}),
\end{equation} 
and hence, comparing the above to \eqref{ub1}, we have
\begin{equation}\label{e:b2}
\varphi^*\Delta_{U_2}f_2=\Delta_{U_1}f_1=\Delta_{U_1}\varphi^*f_2,
\end{equation}
which is the local form of the desired intertwining $\varphi^*\Delta_{2}=\Delta_{1}\varphi^*.$

Now suppose $h\in C^\infty(M_2/\mathcal{F}_2)$ is such that its restriction to $\pi_2\bigl((M_2)_{reg}\bigr)$ is an eigenfunction of $\Delta_{2}$ with eigenvalue $\lambda.$  Then we have via \eqref{e:b2},
\begin{equation}
\Delta_{M_2}\pi_2^*h=\pi_2^*\Delta_{2}h=\lambda \pi_2^*h \text { on } (M_2)_{reg}.
\end{equation}
By continuity $\pi_2^*h$ is a $\mathcal{F}_2$-invariant eigenfunction of $\Delta_{M_2}$ with eigenvalue $\lambda$. Conversely, every $f_2\in C_B^\infty(M_2,\mathcal{F}_2)$ that is an eigenfunction of $\Delta_{M_2}$ with eigenvalue $\lambda$ descends to a local eigenfunction $h_2$ of $\Delta_{2}$ with corresponding eigenvalue $\lambda$.

By \eqref{e:b2}, we now have
\begin{equation}\label{e:evalue}
\lambda \pi_1^*\varphi^*h=\Delta_{M_1}\pi_1^*\varphi^*h \text{ on } (M_1)_{reg}.
\end{equation}
But $\pi_1^*\varphi^*h$ is a smooth $\mathcal{F}_1$-invariant function on all of $M_1$, as is $\Delta_{M_1}\pi_1^*\varphi^*h$. Hence, by continuity, \eqref{e:evalue} holds on all of $M_1$, and $spec(M_2,\mathcal{F}_2)\subset spec(M_1,\mathcal{F}_1).$ Finally, since $\varphi$ has a smooth inverse, the reverse inclusion holds as well.
\end{proof}

To prove Corollary \ref{cor:spaceforms}, we first state and prove two lemmas. The first lemma is a generalization of Proposition 3.1 of \cite{AR2015} to spheres. The second lemma shows that the mean curvature vector field is preserved by a covering map.

\begin{lemma}\label{lemmaA}
Let $\mathbb{S}^{n_i}_{r}$ denote a sphere of radius $r$ and dimension $n_i$, for $i=1,2.$ Let $(\mathbb{S}^{n_1}_{r},\mathcal{F}_1)$ and $(\mathbb{S}^{n_2}_{r},\mathcal{F}_2)$ be two (possibly disconnected) singular Riemannian foliations with closed leaves, and let $\varphi: \mathbb{S}^{n_1}_{r}/\mathcal{F}_1\rightarrow \mathbb{S}^{n_2}_{r}/\mathcal{F}_2$ be an isometry that preserves the codimensions of the leaves. Then the mean curvature vector fields of the corresponding principal leaves are basic, and $\varphi$ preserves the projections of those vector fields.
\end{lemma}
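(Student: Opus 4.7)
The plan is to reduce the statement to the Euclidean case, namely Proposition 3.1 of \cite{AR2015}, via the cone construction. For each $i=1,2$, extend $\mathcal{F}_i$ to a singular Riemannian foliation $\hat{\mathcal{F}}_i$ on $\mathbb{R}^{n_i+1}$ whose leaves are the open cones $\hat L = \{tx : t>0,\, x\in L\}$ over the leaves $L$ of $\mathcal{F}_i$, together with the origin as a singleton leaf. This is the standard construction and is well-known to produce a singular Riemannian foliation on Euclidean space (and to be compatible with the disconnected-SRF formalism, because the discrete group $\Gamma$ acting on the sphere leaf space extends radially to an action on the cone leaf space).

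I would first verify that $\varphi$ extends to an isometry $\hat\varphi : \mathbb{R}^{n_1+1}/\hat{\mathcal{F}}_1 \to \mathbb{R}^{n_2+1}/\hat{\mathcal{F}}_2$. The point is that each quotient $\mathbb{R}^{n_i+1}/\hat{\mathcal{F}}_i$ is canonically the Euclidean cone over $\mathbb{S}^{n_i}_r/\mathcal{F}_i$, so the radial extension $\hat\varphi(t, [x]) := (t, \varphi([x]))$ is an isometry of metric spaces. Moreover, $\hat\varphi$ still preserves leaf codimensions, because coning increases the dimension of every non-origin leaf by one, uniformly on each side.

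Next, apply Proposition 3.1 of \cite{AR2015} to $\hat\varphi$: the mean curvature vector fields $\hat H_i$ of the principal leaves of $\hat{\mathcal{F}}_i$ are basic on $\mathbb{R}^{n_i+1}$, and $d\hat\varphi(\hat H_{1*}) = \hat H_{2*}$. To transfer this to the sphere, I would use the following standard geometric fact. At a point $x\in L \subset \mathbb{S}^{n}_r \subset \mathbb{R}^{n+1}$, the radial direction $\nu=x/r$ is tangent to $\hat L$ and has vanishing second fundamental form as a direction of $\hat L$ (since rays through the origin are Euclidean geodesics). The normal space of $\hat L$ in $\mathbb{R}^{n+1}$ therefore coincides with the normal space of $L$ in $\mathbb{S}^n_r$, and a direct computation using the Weingarten formula for $\mathbb{S}^n_r \hookrightarrow \mathbb{R}^{n+1}$ shows that the tangential part of $II^{\hat L,\mathbb{R}^{n+1}}$ agrees with $II^{L,\mathbb{S}^n_r}$; in the trace convention one obtains $\hat H_i\big|_{\mathbb{S}^{n_i}_r} = H_i$. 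Since $\hat\varphi$ preserves the radial/tangential splitting and restricts to $\varphi$ on the slice of radius $r$, basicity and preservation of $\hat H_i$ descend to basicity of $H_i$ and $d\varphi(H_{1*}) = H_{2*}$, as desired.

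The main obstacle I anticipate is handling the ``possibly disconnected'' qualification throughout: one needs to check that the cone construction and Proposition 3.1 of \cite{AR2015} are available in the disconnected-SRF setting, and that the notion of principal leaf in the cone (as defined in Section 2) pulls back correctly to the notion of principal leaf on the sphere. The remainder of the argument -- the isometric extension to the cone and the mean-curvature comparison between $L$ and $\hat L$ -- should be routine given the Euclidean cone structure of the quotient metric and the explicit second-fundamental-form computation.
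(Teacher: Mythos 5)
Your argument follows the same overall architecture as the paper's proof: extend each $\mathcal{F}_i$ radially to a singular Riemannian foliation on $\mathbb{R}^{n_i+1}$, extend $\varphi$ to an isometry of the cone leaf spaces, apply Proposition 3.1 of \cite{AR2015} there, and pull the conclusion back to the radius-$r$ slice. The one place you diverge is the cone-to-sphere comparison of the mean curvature vectors. You compute directly with the second fundamental form: the radial line is a Euclidean geodesic inside $\hat L$, the normal spaces of $\hat L$ in $\mathbb{R}^{n+1}$ and of $L$ in $\mathbb{S}^n_r$ coincide, the mixed radial terms vanish, and the $T_xL$-restricted second fundamental forms agree, whence $\hat H_i\big|_{\mathbb{S}^{n_i}_r}=H_i$; this computation is correct. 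The paper instead invokes Lemma 5.2 of \cite{AR2016b}, which for generalized isoparametric SRFs gives $H'_i=-\nabla\bigl(\log(Lvol(x))\bigr)$, and reads off the equality of projections by writing this in spherical coordinates $(\rho,\theta)$. Both mechanisms work; yours is more elementary and self-contained (pure classical submanifold geometry), while the paper's reuses a lemma it cites anyway. A minor difference: the paper asserts basicity of $H_i$ and $H'_i$ outright by citing that every SRF on a sphere or on Euclidean space is generalized isoparametric (again \cite{AR2016b}), rather than descending basicity from $\hat H_i$ to $H_i$ as you propose — but your route also closes, since the leaves and normal bundles of $\mathcal{F}_i$ on the radius-$r$ slice are precisely the restrictions of those of $\hat{\mathcal{F}}_i$. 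Your caution about the disconnected-SRF formalism is warranted and is not explicitly discussed in the paper's proof either.
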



\begin{proof}
First observe that a singular Riemannian foliation of a sphere $(\mathbb{S}^{n_i}_{r}, \mathcal{F}_i)$ can be extended to a singular Riemannian foliation $\mathcal{F}'_i$ on $\mathbb{R}^{n_i+1}$ whose leaves consist of the origin and the images of the leaves of $\mathcal{F}_i$ under homothetic transformations. In spherical coordinates $(\rho, \theta)$ on $\mathbb{R}^{n_i+1}$ with $\theta\in \mathbb{S}^{n_i}_{r},$ the leaves of $\mathcal{F}'_i$ are the radial extensions of the leaves of $\mathcal{F}_i$, and the restriction of the leaves of $\mathcal{F}'_i$ to $\mathbb{S}^{n_i}_{r}\subset \mathbb{R}^{n_i+1}$ is accomplished by setting $\rho=r$. Thus, the restriction of the leaves of $\mathcal{F}'_i$ to $\mathbb{S}^{n_i}_{r}\subset \mathbb{R}^{n_i+1}$ yields the original foliation $(\mathbb{S}^{n_i}_{r}, \mathcal{F}_i)$.

Let $H_i$ and $H_i'$ denote the mean curvature vector fields of $(\mathbb{S}^{n_i}_{r}, \mathcal{F}_i)$ and $(\mathbb{R}^{n_i+1},\mathcal{F}'_i),$ respectively. As noted in \cite{AR2016b}, any singular Riemannian foliation in spheres or Euclidean space is generalized isoparametric, and thus $H_i$ and $H'_i$ are basic, for $i=1,2$.

Next, note that the isometry $\varphi:\mathbb{S}^{n_1}_{r}/\mathcal{F}_1\rightarrow \mathbb{S}^{n_2}_{r}/\mathcal{F}_2$ induces an isometry $\varphi^0:\mathbb{R}^{n_1+1}/\mathcal{F}'_1\rightarrow \mathbb{R}^{n_2+1}/\mathcal{F}'_2.$ Proposition 3.1 of \cite{AR2015} now can be applied to $\varphi^0$ to show that it preserves the projections of the basic vector fields of $(\mathbb{R}^{n_1+1},\mathcal{F}'_1)$ and $(\mathbb{R}^{n_2+1},\mathcal{F}'_2).$ Since the foliations are generalized isomparametric SRFs, we may apply Lemma 5.2 of \cite{AR2016b}, to express $H'_i=-\nabla (log(Lvol(x))$ where $Lvol(x)$ denotes the volume of the leaf $L_x$ of $\mathcal{F}'_i$. By expressing $H'_i$ in spherical coordinates, $(\rho,\theta)$, $\theta\in \mathbb{S}^{n_i}_{r}$ on $\mathbb{R}^{n_i+1}$ one sees that $H'_i$ and $H_i$ have the same projections onto their respective leaf spaces for $i=1, 2$. It now follows that $\varphi: \mathbb{S}^{n_1}_{r}/\mathcal{F}_1\rightarrow \mathbb{S}^{n_2}_{r}/\mathcal{F}_2$ preserve projections of the mean curvature vector fields $H_1$ and $H_2$.\end{proof}

\begin{lemma}\label{cor:coveringspace}
Suppose $\tilde{p}: \tilde{M}\rightarrow M$ is a covering map and $(M,\mathcal{F})$ is a generalized isoparametric singular Riemannian foliation. Then the lift of $\mathcal{F}$ by $\tilde{p}$, which we denote by $\tilde{\mathcal{F}},$ is a singular Riemannian foliation of $\tilde{M}$ (with possibly disconnected leaves) with respect to the lifted metric. Furthermore, the local isometry $\tilde{p}$ induces a smooth SRF isometry $\tilde{p}: \tilde{M}/\tilde{\mathcal{F}}\rightarrow M/\mathcal{F}$ such that $d\tilde{p}(\tilde{H}_*)=H_*$, where $H$ and $\tilde{H}$ are the mean curvature vector fields of $(M,\mathcal{F})$ and $(\tilde{M},\tilde{\mathcal{F}}),$ respectively. 
\end{lemma}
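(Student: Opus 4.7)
The plan is to verify the three assertions in order: (i) $\tilde{\mathcal{F}}$ is a (possibly disconnected) SRF on $\tilde{M}$, (ii) $\tilde{p}$ descends to a smooth SRF isometry of leaf spaces, and (iii) the projected mean curvature vector fields correspond under $d\tilde{p}$. The uniform theme is that $\tilde{p}$ is a local isometry, so all local geometric data transfers directly between $\tilde{M}$ and $M$.

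First I would define $\tilde{\mathcal{F}}$ by declaring its leaves to be preimages $\tilde{p}^{-1}(L)$ of leaves $L \in \mathcal{F}$, organized as an SRF with disconnected leaves in the sense reviewed above: let $\tilde{\mathcal{F}}^0$ consist of the connected components of these preimages, and let $\Gamma$ be the group induced by the deck transformations acting on $\tilde{M}/\tilde{\mathcal{F}}^0$. To verify Definition \ref{SRF}, transitivity of $\Xi_{\tilde{\mathcal{F}}}$ on each leaf follows by locally lifting the generating fields of $\Xi_{\mathcal{F}}$ across evenly covered neighborhoods (the lifts are smooth and tangent to $\tilde{\mathcal{F}}$). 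For adaptedness of the lifted metric, a geodesic in $\tilde{M}$ perpendicular to $\tilde{L}_{\tilde{x}}$ projects under the local isometry $\tilde{p}$ to a geodesic perpendicular to $L_{\tilde{p}(\tilde{x})}$; by adaptedness of $g$ it stays orthogonal to every leaf of $\mathcal{F}$ it meets, and lifting back shows the original geodesic stays orthogonal to every leaf of $\tilde{\mathcal{F}}$ it meets.

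Next, define $\bar{p}:\tilde{M}/\tilde{\mathcal{F}}\to M/\mathcal{F}$ by $[\tilde{x}]\mapsto[\tilde{p}(\tilde{x})]$; this is well defined and bijective by construction of $\tilde{\mathcal{F}}$, and $\bar{p}\circ\tilde{\pi}=\pi\circ\tilde{p}$. The metric space isometry follows from the fact (reviewed at the start of Section \ref{proofs}) that leaf-space distances are realized by lengths of horizontal geodesics between leaves: such horizontal geodesics lift uniquely via the local isometry $\tilde{p}$, and length is preserved in both directions. For smoothness, given $f\in C^\infty(M/\mathcal{F})$, the pullback $\tilde{p}^*\pi^*f=\tilde{\pi}^*\bar{p}^*f$ is smooth on $\tilde{M}$ and constant on the fibers $\tilde{p}^{-1}(L)$, so $\bar{p}^*f\in C^\infty(\tilde{M}/\tilde{\mathcal{F}})$; the same argument run locally through evenly covered charts, together with the fact that $\tilde{p}$ is locally a diffeomorphism, shows that the inverse is smooth as well.

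Finally, for the mean curvature statement, note that the second fundamental form of a submanifold is a pointwise invariant of local isometries. Thus, at any $\tilde{x}$ in the regular region of $\tilde{M}$, the restriction of $\tilde{p}$ to a neighborhood of $\tilde{x}$ is an isometry carrying $\tilde{L}_{\tilde{x}}$ onto an open subset of $L_{\tilde{p}(\tilde{x})}$, so $d\tilde{p}(\tilde{H}(\tilde{x}))=H(\tilde{p}(\tilde{x}))$. Since $(M,\mathcal{F})$ is generalized isoparametric, $H$ is basic and projects to $H_*\in C^\infty(\pi(M_{\mathrm{reg}}))$; the pointwise identity, combined with $\bar{p}\circ\tilde{\pi}=\pi\circ\tilde{p}$, then forces $\tilde{H}$ to be basic as well and yields $d\bar{p}(\tilde{H}_*)=H_*$. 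The main obstacle I expect is purely bookkeeping: because $\tilde{\mathcal{F}}$ has disconnected leaves, one must keep careful track of the distinction between $\tilde{\mathcal{F}}^0$ and $\tilde{\mathcal{F}}$, ensure that the $\Gamma$-action on $\tilde{M}/\tilde{\mathcal{F}}^0$ produces exactly the leaf space $\tilde{M}/\tilde{\mathcal{F}}$ that matches $M/\mathcal{F}$ through $\bar{p}$, and confirm that the notion of basic vector field and its projection used in the statement is the one compatible with disconnected leaves.
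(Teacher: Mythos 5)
Your proof is correct and follows essentially the same approach as the paper: all three claims reduce to the fact that $\tilde{p}$ is a local isometry, with the metric-space isometry on leaf spaces established by lifting minimizing horizontal geodesics and the mean curvature correspondence coming from local-isometry invariance of the relevant leaf geometry. The paper gives only a brief sketch citing standard references for the lift; your version supplies the verification of the SRF axioms, the smoothness of the induced map and its inverse via evenly covered charts, and the pointwise second fundamental form argument in detail.
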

\begin{proof}

See standard references such as \cite{Mol} regarding the lift of a foliation. With the lifted metric on $\tilde{M}$, the lifted foliation $\tilde{\mathcal{F}}=\tilde{p}^*(\mathcal{F})$ is a singular Riemannian foliation with possibly non-connected leaves. The covering map $\tilde{p}:\tilde{M}\rightarrow M$ is a foliated diffeomorphism that sends the leaves of $\tilde{\mathcal{F}}$ onto the leaves of $\mathcal{F}$. Furthermore, since $\tilde{M}$ is equipped with the lifted metric, $\tilde{p}$ is a local isometry, and hence $d\tilde{p}(\tilde{H})=H.$ To prove that $\tilde{p}$ induces a smooth SRF isometry on the leaf spaces, which we will also denote by $\tilde{p},$ we note that the covering map induces a metric space isometry on the quotients because the distance between leaves can be realized by a minimizing horizontal geodesic in the fundamental region. Since these lift to the cover, the corresponding map on the leaf spaces is an isometry on the quotient, with $d\tilde{p}(\tilde{H}_*)=H_*.$ 
\end{proof}

\begin{proof}[Proof of Corollary \ref{cor:spaceforms}.]
As noted above, any singular Riemannian foliation in spheres or Euclidean space is generalized isoparametric. Thus, by Theorem \ref{main}, it is enough to show that $\varphi$ preserves the projections of the basic mean curvature vector fields of $(M_1,\mathcal{F}_1)$ and $(M_2,\mathcal{F}_2).$ 

Suppose first that $\kappa=0,$ and $M_1=\mathbb{R}^{n_1}$ and $M_2=\mathbb{R}^{n_2}.$ Proposition 3.1 of \cite{AR2015} immediately yields the projections of the mean curvature vector fields are preserved. Now suppose $M_1=\mathbb{R}^{n_1}/\Gamma_1$ and $M_2=\mathbb{R}^{n_2}/\Gamma_2$ where $\Gamma_1$ and $\Gamma_2$ are finite subsets of the isometries of the appropriate Euclidean spaces, acting properly discontinuously.  Let $\tilde{p}_i: \mathbb{R}^{n_i}\rightarrow M_i$ be the usual covering maps, for $i=1, \,2$. By means of Lemma \ref{cor:coveringspace} applied to each covering map, $\varphi: M_1/\mathcal{F}_1\rightarrow M_2/\mathcal{F}_2$ lifts to a smooth SRF isometry $\tilde{\varphi} :\mathbb{R}^{n_1}/\tilde{\mathcal{F}}_1\rightarrow \mathbb{R}^{n_2}/\tilde{\mathcal{F}}_2$ by $\tilde{\varphi}=\tilde{p}_2^{-1}\circ \varphi \circ\tilde{p}_1.$ Furthermore, the map $\tilde{\varphi}$ preserves the mean curvature vector field if and only if $\varphi$ does, and the result follows.

Now suppose $\kappa>0$ and that $M_1=\mathbb{S}_r^{n_1}/\Gamma_1$ and $M_2=\mathbb{S}_r^{n_2}/\Gamma_2$ are space forms of the same positive constant sectional curvature $\kappa=1/r^2$, where $\Gamma_1$ and $\Gamma_2$ are finite subsets of the isometries of $\mathbb{S}^{n_1}_r$ and $\mathbb{S}^{n_2}_r$, respectively, acting properly discontinuously. If $\Gamma_1$ and $\Gamma_2$ are trivial, then the result follows immediately from Lemma \ref{lemmaA}. If they are not trivial, then by letting $\tilde{p}_i: \mathbb{S}^{n_i}_r\rightarrow M_i$ be the usual covering maps, for $i=1, \,2$, and applying Lemma \ref{cor:coveringspace} to each covering map as above, it follows that $\varphi: M_1/\mathcal{F}_1\rightarrow M_2/\mathcal{F}_2$ lifts to a smooth SRF isometry $\tilde{\varphi} :\mathbb{S}^{n_1}_r/\tilde{\mathcal{F}}_1\rightarrow \mathbb{S}^{n_2}_r/\tilde{\mathcal{F}}_2$ by $\tilde{\varphi}=\tilde{p}_2^{-1}\circ \varphi \circ\tilde{p}_1.$ Thus, by Lemma \ref{cor:coveringspace}, the map $\tilde{\varphi}$ preserves the mean curvature vector field if and only if $\varphi$ does. And now Lemma \ref{lemmaA} shows that $\tilde{\varphi}$ does indeed preserves the mean curvature vector fields, and thus we have the desired result for these space forms as well.

\end{proof}

\begin{rem}
\normalfont Proposition 3.1 of \cite{AR2015} is proved by using the characterization of the mean curvature vector field as the trace of the shape operator of the leaves, $S_{x}$, for $x\in V_p^\perp,$ the space of horizontal vectors to the foliation at the point $p$. Under the hypotheses and notation of this proposition, if there exists a $p_1\in M_1$ and an $x_1\in (V_1)^\perp_{p_1}$ such that all the eigenvalues of $S_{x_1}$ are non-zero, then $dim(M_1)=dim(M_2).$ This follows from the fact that if all the eigenvalues of $S_{x_1}$ are non-zero, the same is true for the eigenvalues of $S_{x_2}$. Further, the eigenvectors of $S_{x_1}$ that correspond to (non-zero) eigenvalues are in correspondence with the eigenvectors of $S_{x_2}$ that correspond to (non-zero) eigenvalues. These eigenvectors span the tangent space to the leaves of both $M_1$ and $M_2$. Since the leaf codimensions are the same by hypothesis, it follows that the dimensions of $M_1$ and $M_2$ must be the same. In particular, for quotients of spheres, such points $p_1$, as above, exist. Thus in the previous proof, for the case of $\kappa>0$, the dimensions of the spheres are equal. (See the alternative proof in Appendix for a more detailed calculation of the eigenvalues and their multiplicities in this case of spheres.)

\end{rem}
\end{subsection}

\begin{subsection}{The obstacle to isospectrality for isometric SRF}
In light of the proof, we now gain some insight into the lack of isospectrality {\it in general} when two leaf spaces are isometric. Suppose we have a smooth SRF isometry $\varphi: M_1/\mathcal{F} _1\rightarrow M_2/\mathcal{F}_2.$ Then, in the notation of the proof of Theorem \ref{main} (see equations \eqref{ub1} and \eqref{ub2} which hold on an open dense subset of the leaf spaces), we know that $\varphi^*\Delta_{B_2}=\Delta_{B_1}$ and $\varphi^*g_2=g_1$. However, the remaining ingredients in the expression for the Laplacians in \eqref{ub1} and \eqref{ub2}, are the mean curvature vector fields, $H_1$ and $H_2$. These consist of purely leaf-wise data, from Lemma 5.2 of \cite{AR2016b}, and thus are not related by an isometry of the leaf spaces. Hence, the two Laplacians could be different operators (with the same principal symbol). As such, there is no reason to expect isospectrality unless additional assumptions are added to ensure that the mean curvature vector fields correspond. Corollary \ref{cor:spaceforms} shows that this is possible in the case of constant curvature with the addition of the leaf-codimension preservation hypothesis.

Finally, we note that one can linearize singular Riemannian foliations via a transverse version of the exponential map to the infinitesimal foliation (see, for example, \cite{AR2015}) on the tangent space, which is a flat space form. Indeed, passing to the linearized foliation is a standard method in SRF geometry. However, this exponential map does not necessarily relate the mean curvature vectors on the original manifold to that on the tangent space, and thus applying Corollary \ref{cor:spaceforms} to the infinitesimal foliation is not necessarily useful in addressing questions about the basic spectra.

\end{subsection}
\end{section}

\begin{section}{Applications of the Main Result}\label{consequences}

\begin{subsection}{Applications to Isometric Group Actions and their Reductions}
There are a number of situations in which smooth SRF isometries between orbit spaces are known to exist. Many of these arise from metric space isometries that are known to be, under certain circumstances, smooth. For example, any metric space isometry of a leaf space that has an orbifold structure is smooth, Theorem 1.3 of \cite{AL2011}, or any metric space isometry between orbit spaces of dimension at most 3, Theorem 1.5 of \cite{AL2011}. These metric space isometries, in turn, come about because of standard reductions of the orbit spaces whereby one can replace the orbit space $M_1/G_1$ with another one, $M_2/G_2$, which typically arises from a simpler group action. Examples of such reductions include the principal isotropy reduction, the minimal reduction, and the effectivization of a group action.

\begin{cor}
Suppose $M$ is a compact manifold that admits isometric actions by closed, connected Lie groups $G_1$ and $G_2$, respectively. If the two actions are orbit equivalent (in the sense that for all $x\in M,$ $G_1\cdot x=G_2\cdot x$) then the $G_1$-invariant spectrum and the $G_2$-invariant spectrum are equal.
\end{cor}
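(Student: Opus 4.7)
The plan is to obtain this corollary as a direct application of Theorem \ref{main} with $\varphi$ taken to be the identity on the common underlying set. First I would observe that orbit equivalence, $G_1 \cdot x = G_2 \cdot x$ for every $x \in M$, means that the two singular Riemannian foliations $\mathcal{F}_1$ and $\mathcal{F}_2$ (by $G_1$- and $G_2$-orbits, respectively) are identical as partitions of $M$. Consequently $M/G_1$ and $M/G_2$ have the same underlying set, and the identity map $\varphi \colon M/G_1 \to M/G_2$ is tautologically an isometry of metric spaces, since both quotient metrics are defined from the same Riemannian metric on $M$ and the same partition. Moreover $C^\infty_B(M,\mathcal{F}_1) = C^\infty_B(M,\mathcal{F}_2)$, because basic functions are exactly those that are constant on the (common) orbits, so the smooth structures of Definition \ref{d:smoothiso} agree and $\varphi$ is a smooth SRF isometry with smooth inverse.

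Next I would verify the two hypotheses of Theorem \ref{main}. Each $\mathcal{F}_i$ arises from an isometric action of a closed Lie group on the compact manifold $M$; the image of $G_i$ in $\mathrm{Isom}(M)$ is compact, and the orbits coincide with those of this compact image, so Pacini's theorem (Example 1 of Section 2) applies to show that $\mathcal{F}_i$ is generalized isoparametric. Hence $H_i$ is basic on the regular region, giving condition~(1). For condition~(2), the decisive observation is that the mean curvature vector field of a submanifold $L \subset M$ is an intrinsic invariant of $L$ as an isometrically embedded submanifold of $(M,g)$: it is the trace of the second fundamental form, which depends only on the pair $(L,M)$ and not on any group realizing $L$ as an orbit. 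Since the regular leaves of $\mathcal{F}_1$ and $\mathcal{F}_2$ are literally the same submanifolds of $M$, we have $H_1 = H_2$ on the regular region, and therefore $d\varphi(H_{1*}) = H_{2*}$ under the identity identification of the leaf spaces.

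With both hypotheses in place, Theorem \ref{main} yields $\mathrm{spec}_B(M,\mathcal{F}_1) = \mathrm{spec}_B(M,\mathcal{F}_2)$, which is precisely the equality of the $G_1$- and $G_2$-invariant spectra. I do not foresee any serious obstacle; the only point that deserves a brief mention is the passage from ``closed connected Lie group'' to ``compact group'' needed to invoke Pacini's theorem, and this is immediate from the compactness of $\mathrm{Isom}(M)$ for compact $M$. The proof is essentially an unpacking of the fact that orbit equivalence collapses the two foliations into a single geometric object, to which Theorem \ref{main} can be applied trivially.
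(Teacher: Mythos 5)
Your proof is correct and follows the same overall strategy as the paper: identify the two foliations as the same partition of $M$, take $\varphi$ to be the identity on the common leaf space, verify the two hypotheses of Theorem~\ref{main}, and conclude. The one small difference is how smoothness of $\varphi$ is justified: the paper notes that the identity preserves orbit codimensions and then invokes Theorem~1.1 of \cite{AR2015} to conclude it is smooth, whereas you argue more directly that $C^\infty_B(M,\mathcal{F}_1) = C^\infty_B(M,\mathcal{F}_2)$ because the orbits literally coincide, so the two smooth structures agree tautologically. Your route is slightly more elementary for this special case; the paper's citation buys uniformity with the other corollaries that use codimension-preservation as a hypothesis. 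Both correctly spell out that $H_1 = H_2$ because mean curvature is an intrinsic invariant of the (common) submanifolds, and both rely on Pacini's result (via compactness of $\mathrm{Isom}(M)$) for basicness.
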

\begin{proof}
Under the hypotheses above, the mean curvature vector fields over the regular regions of both actions are basic, and since the $G_1$-orbits are the same as the $G_2$-orbits, the leaf spaces are identical, and the orbit codimensions are the same for both actions. Hence, by Theorem 1.1 of \cite{AR2015}, the identity map is a smooth SRF isometry. The mean curvature vector fields are identical and the result follows from Theorem \ref{main}.
\end{proof}

As an application of the above, we have the following for Lie groups acting by isometries on a manifold $M$.
\begin{cor}
Let $H=\cap_{x\in M}G_x$, the intersection of all the isotropy subgroups. Then $K=G/H$ acts effectively on $M$ and the $G$-invariant spectrum and the $K$-invariant spectrum are identical.
\end{cor}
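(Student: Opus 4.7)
The plan is to reduce this corollary to the immediately preceding one about orbit-equivalent actions, after verifying that $K$ is a legitimate closed connected Lie group of isometries with the same orbits as $G$.

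First, I would check the basic group-theoretic facts. The subgroup $H = \cap_{x \in M} G_x$ is precisely the kernel of the smooth homomorphism $\rho : G \to \mathrm{Isom}(M)$ given by the action. As the kernel of a continuous homomorphism it is both closed and normal, and it is by definition exactly the subgroup acting trivially on $M$. Hence $K := G/H$ inherits the structure of a Lie group; it is connected because $G$ is; and the induced action $(gH)\cdot x := g\cdot x$ is well-defined, isometric (since $G$ acts isometrically and $H$ acts trivially), and effective (if $gH$ fixes every $x \in M$, then $g \in G_x$ for every $x$, hence $g \in H$, so $gH$ is the identity coset).

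Second, I would verify the key geometric observation: for every $x \in M$,
\begin{equation*}
K \cdot x \;=\; \{\, gH \cdot x : g \in G \,\} \;=\; \{\, g\cdot x : g \in G \,\} \;=\; G \cdot x,
\end{equation*}
since $H$ acts trivially. Thus the $G$-action and the $K$-action are orbit equivalent in the sense of the preceding corollary.

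Third, I would invoke the preceding corollary directly: since $G$ and $K$ are closed connected Lie groups acting isometrically on the compact manifold $M$ with identical orbits, their invariant spectra coincide. (One could equally well argue by hand: a smooth function on $M$ is $G$-invariant iff it is constant on every $G$-orbit iff it is constant on every $K$-orbit iff it is $K$-invariant, so $C^\infty_B(M,\mathcal{F}_G) = C^\infty_B(M,\mathcal{F}_K)$ as subspaces of $C^\infty(M)$, and the restriction of $\Delta_M$ to this common subspace is the same operator in both cases.)

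There is essentially no obstacle here; the statement is almost a tautology once orbit equivalence is observed, and the minor technical checks are verifying that the kernel of the action is closed and normal and that the resulting quotient action is again isometric, effective, and by a connected Lie group so that the hypotheses of the previous corollary are satisfied.
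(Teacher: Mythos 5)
Your proof is correct and follows essentially the same route as the paper: verify that $H$ is normal (being the kernel of the action homomorphism) so that $K=G/H$ is a closed connected Lie group acting effectively and isometrically, observe the orbit equivalence $K\cdot x = G\cdot x$, and then invoke the preceding corollary. Your parenthetical "by hand" observation that $C^\infty_B(M,\mathcal{F}_G)=C^\infty_B(M,\mathcal{F}_K)$ as subspaces of $C^\infty(M)$ is a nice extra remark but is not the route the paper takes; the paper, like your main argument, cites the orbit-equivalence corollary.
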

\begin{proof}
It is straightforward to check that $H$ is normal in $G$. Further, $K$ acts on $M$ because the elements of $H$ fix every point, and hence the action is well-defined, and effective (by construction). The reduction of the action to $M/K$ defines a metric space isometry $\varphi$ between $M/G$ and $M/K$ in a trivial way, since this reduction is an orbit equivalence. Thus, trivially, the dimensions and codimensions of the orbits are preserved.  By the previous corollary, the $G$-invariant spectra and the $K$ invariant spectra are the same.
\end{proof}

\end{subsection}

\begin{subsection}{Properties of SRF isometries}
We conclude this section with some remarks about smooth SRF isometries. In general, there are many different ways to represent a space $Q$ as the leaf space of a singular Riemannian foliation $(M,\mathcal{F})$, even for quotients $Q$ that have an orbifold structure. Indeed, in \cite{GL2015} the authors describe various singular Riemannian foliations which carry an orbifold structure on the leaf spaces, in the sense previously noted. In fact, they describe families of representations of isometric group actions that produce the same orbifold. However, in many cases, these orbifold quotients descend from singular Riemannian foliations that have variable leaf codimension, and thus do not meet the conditions of the previous corollaries that are sufficient to guarantee equality of the $\mathcal{F}$-basic and orbifold spectra.  

Further, SRF isometries generally do not preserve many features of group actions and singular Riemannian foliations such as isotropy type, or number of strata in the stratification by leaf dimension. To see this, we briefly revisit Example \ref{ex1}. In that example, a smooth SRF isometry exists between the orbifold $\mathcal{O}=[0,\pi]$ and $S^2/SO(2)$. However, $\mathcal{O}$ is an orbifold, and hence arises from a regular Riemannian foliation via the frame bundle representation (among others), and thus has only one stratum consisting of leaves of codimension 1. On the other hand, $S^2/SO(2)$ has a stratification consisting of two strata--a singular stratum consisting of the two poles of codimension 2, and the complementary set, whose leaves have codimension 1. In addition, $\mathcal{O}$ has $\mathbb{Z}_2$ isotropy at the endpoints, while $S^2/SO(2)$ has $SO(2)$ isotropy at the corresponding points. Thus, smooth SRF isometries do not preserve either the number of strata in a stratification, the codimension of the leaves over singular strata, or the isotropy type of corresponding points. These isometries do preserve topological features of the leaf space, such as dimension of the leaf space, the presence of (topological) boundary, and some features of the images of stratified sets in the ambient space $M.$

We recall first the definition of quotient codimension. For a leaf space $\pi:M\rightarrow M/\mathcal{F}$, if $\Sigma$ is a subset of $M$ saturated by leaves, then the quotient codimension of $\Sigma$ is defined to be:
\begin{equation}
qcodim(\Sigma)=dim\bigl(\pi(M_{reg})\bigr)-dim\bigl(\pi(\Sigma)\bigr).
\end{equation}

The following demonstrates that a smooth SRF isometry preserves the quotient codimension of saturated sets. 

\begin{prop}\label{codim}
Let $\Sigma^1_k$ denote the union of strata in $M_1$ with quotient codimension $k$, and let $\overline{\Sigma^1_k} :=\pi_1(\Sigma^1_k),$ and similarly define $\overline{\Sigma^2_k}$. If $\varphi$ is a smooth SRF isometry between leaf spaces as above, then $\varphi(\overline{\Sigma^1_k})=\overline{\Sigma^2_k}$ for each $k$ for which $\Sigma^1_k\not=\emptyset$.
\end{prop}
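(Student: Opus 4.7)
My strategy is to recover both the regular part $\pi_i(M_{i,reg})$ and the locus $\overline{\Sigma^i_k}$ from intrinsic data on $M_i/\mathcal{F}_i$, so that the smooth SRF isometry $\varphi$ automatically preserves them. First, I would show that $\varphi$ maps $\pi_1(M_{1,reg})$ onto $\pi_2(M_{2,reg})$: the regular part admits an intrinsic description as the maximal open, dense subset of the leaf space on which the quotient is locally a Riemannian orbifold, equivalently, on which the sheaf of smooth functions from Definition \ref{d:smoothiso} is locally isomorphic to that of an orbifold. Since $\varphi$ preserves both the metric and the sheaf of smooth functions, it must send the regular parts onto each other. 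In particular $d := \dim \pi_1(M_{1,reg}) = \dim \pi_2(M_{2,reg})$, so that the reference dimension used in the definition of quotient codimension agrees on both sides.

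Next, I would exploit the Aleksandrov-geometric structure of the leaf spaces. At a point $q\in \pi_i(\Sigma^i_k)$, the tangent cone $T_q(M_i/\mathcal{F}_i)$ splits isometrically as $\mathbb{R}^{d-k}\times C_q$, where the Euclidean factor arises from the tangent space to the $(d-k)$-dimensional orbifold stratum through $q$, and $C_q$ is the singular cone determined by the infinitesimal foliation in the transverse direction. In particular, the quotient codimension $k$ is detected metrically as the codimension of the Euclidean factor of the tangent cone at $q$. Because $\varphi$ is a metric-space isometry, it preserves tangent cones and their maximal Euclidean splittings, and therefore sends points with tangent cone having a $(d-k)$-dimensional Euclidean factor to points with the same property. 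This yields $\varphi(\overline{\Sigma^1_k})\subseteq\overline{\Sigma^2_k}$, and the reverse inclusion follows from the analogous argument for $\varphi^{-1}$.

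The main obstacle is to justify rigorously the characterization of $\overline{\Sigma^i_k}$ via the Euclidean factor of the tangent cone: one must verify that this factor has exactly dimension $d-k$ at every point of $\pi_i(\Sigma^i_k)$ and nowhere else. Granting the structural results in \cite{Mol} and \cite{AR2015} on the local geometry of $M/\mathcal{F}$ near a stratum, together with the description of the infinitesimal foliation at such a point, this identification is natural: the stratum contributes the flat factor of the claimed dimension, while the transverse infinitesimal foliation produces the singular cone $C_q$. Once this local characterization is in hand, preservation of quotient codimension by $\varphi$ is an immediate consequence of its being an isometry.
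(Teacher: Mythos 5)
Your approach is genuinely different from the paper's. The paper proves the proposition by a topological induction on the stratification by leaf dimension: it notes that the image of the regular stratum is open and dense, argues that a homeomorphism must respect this, restricts to the complement, and repeats the argument for the next quotient-codimension value, finitely many times. You instead propose to recover the quotient-codimension strata intrinsically from the metric structure, via the maximal Euclidean factor of the tangent cone at each point. In spirit your approach aims at the same conclusion (that the stratification is intrinsic to the quotient), but it relies on two identifications that do not hold.

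The first step contains a genuine gap. Your claim that $\pi_i(M_{i,\mathrm{reg}})$ is ``the maximal open dense subset of the leaf space on which the quotient is locally a Riemannian orbifold'' is false. The paper's own Example~\ref{ex1} illustrates this: the full quotient $S^2/SO(2)\cong[0,\pi]$ is a Riemannian orbifold (with $\mathbb{Z}_2$ reflectors at the endpoints), while the image of the regular region is only the open interval $(0,\pi)$. Thus the image of the regular region is strictly contained in the maximal orbifold locus, and your proposed intrinsic characterization does not recover it. The issue is that ``quotient is locally an orbifold'' (infinitesimally polar) is a weaker condition than ``is in the image of the regular stratum.''

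The tangent-cone step has a related problem. Consider the frame-bundle representation of the same orbifold, $\mathcal{F}r([0,\pi])/O(1)$: here every leaf is $0$-dimensional, so the whole frame bundle constitutes a single stratum of quotient codimension $0$, yet at the boundary points $\{0,\pi\}\subset[0,\pi]$ the tangent cone is $[0,\infty)$, whose maximal Euclidean factor is trivial. Your criterion would therefore assign these points quotient codimension $1$, not $0$. The underlying reason is that the claimed splitting $T_q(M/\mathcal{F})\cong\mathbb{R}^{d-k}\times C_q$ with $\mathbb{R}^{d-k}$ maximal presupposes that $\pi_i(\Sigma^i_k)$ is a $(d-k)$-manifold near $q$, but this fails precisely at points whose leaves are exceptional (maximal dimension but nontrivial transverse holonomy), where $\pi_i(\Sigma^i_k)$ has orbifold singularities. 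Since the quotient codimension of a stratum is defined via the leaf-dimension stratification of $M_i$ (which records data about $(M_i,\mathcal{F}_i)$, not merely the metric space $M_i/\mathcal{F}_i$), it cannot in general be read off from the tangent cone alone.
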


\begin{proof}
$M_1$ and $M_2$ are each stratified by a finite collection of sets $S_i,$ respectively $T_j$ where $S_i$ is the union of leaves of dimension $i$, and that the dimension of the leaves is lower semi-continuous, \cite{Mol}, and similarly for $T_j$. Here $i$ takes values from $\{0, i_1\dots,i_{max}\}$, where $i_{max}$ is the maximum dimension of the orbits. Hence, $M_1=\bigcup_i S_i$ and $\overline{S_i}\subset \bigcup_{k\le i} S_k,$ and similarly for $M_2.$ In fact, $S_i$ is open and dense in $\bigcup_{k\le i} S_k.$ Note, these stratifications may be a little finer than the stratifications by quotient codimension. By assumption, $\varphi$ is a homeomorphism. It follows easily that when $k=0$, $\overline{\Sigma^1_0}$ contains $\pi(S_{i_{max}})$ and thus is dense with non-empty interior in $M_1/\mathcal{F}_1$ and $\varphi(\overline{\Sigma^1_0})=\overline{\Sigma^2_0}.$ Note: it follows that $dim\bigl(\pi_1((M_1)_{reg})\bigr)=dim\bigl(\pi_2((M_2)_{reg})\bigr).$ Hence, $\varphi$ restricts to a homeomorphism on the complements: $\varphi((\overline{\Sigma^1_0})^c)=(\overline{\Sigma^2_0})^c.$ Let $k_1$ denote the first non-zero quotient codimension of $M_1$. Then $\overline{\Sigma^1_{k_1}}$ contains $\pi(S_{i_1})$ and thus is dense with non-empty interior in $(\overline{\Sigma^1_0})^c$ and so $\varphi(\overline{\Sigma^1_{k_1}})$ has the same property in $(\overline{\Sigma^2_0})^c.$  Further, its dimension is equal to $k_1$. It follows that $k_1$ is also the first non-zero quotient dimension in $M_2$, and thus $\varphi((\overline{\Sigma^1_{k_1}})^c)=(\overline{\Sigma^2_{k_1}})^c,$ where the complements are taken in $(\overline{\Sigma^1_0})^c$ and $(\overline{\Sigma^2_0})^c.$  This argument can be repeated finitely many times to show the conclusion.
\end{proof}
 
Finally, we note that in \cite{AS2017}, there are examples of isospectral leaf spaces that have strata of different quotient codimensions, and thus, this situation is not audible.

\end{subsection}
\end{section}
\begin{section}{Appendix}
Here, we include an alternate proof of Corollary \ref{cor:spaceforms} using the characterization of the mean curvature vector field in terms of the trace of the shape operator of the leaves. In particular, we show that the projections of the mean curvature vector fields are preserved using Jacobi fields and special properties of foliations on spheres to calculate the eigenvalues and their multiplicities of the shape operators. This proof starts by proving the result in the case of a regular Riemannian foliation in the first lemma below, before generalizing to the singular case. This is essentially just a modification of Proposition 4.1.1 of \cite{GW2009}.

\begin{lemma}\label{l:regular}
If $M_1$ and $M_2$ are space forms of the same non-negative curvature $\kappa$ that admit regular Riemannian foliations $\mathcal{F}_1$ and $\mathcal{F}_2,$ and $\varphi:M_1/\mathcal{F}_1 \rightarrow M_2/\mathcal{F}_2$ is a smooth SRF isometry, then $d\varphi(H_{1*})=H_{2*}$.
\end{lemma}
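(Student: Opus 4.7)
My plan is to compute the mean curvature vectors via traces of shape operators of the leaves, then use the explicit form of the Jacobi equation in constant curvature $\kappa$ to transfer this computation between $M_1$ and $M_2$ via the isometry $\varphi$. Since the foliations are regular, each projection $\pi_i\colon M_i \to B_i := M_i/\mathcal{F}_i$ is a local Riemannian orbifold submersion, so $\varphi$ is a local orbifold isometry, and its differential yields an isometry of horizontal spaces $\nu_{p_1}L_{p_1} \cong \nu_{p_2}L_{p_2}$ at any corresponding pair of regular points $p_1, p_2$.

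I would then fix a horizontal unit vector $\xi_1 \in \nu_{p_1}L_{p_1}$, set $\xi_2 = d\varphi(\xi_1)$, and consider the horizontal geodesics $c_i(t) = \exp_{p_i}(t\xi_i)$. Along $c_i$, the shape operator of the leaf satisfies a matrix Riccati equation governed by the ambient curvature, which in constant curvature $\kappa$ reduces to an ODE depending only on $\kappa$ and the initial shape operator $S_{\xi_i}$. Equivalently, a basis of holonomy Jacobi fields (i.e., Jacobi fields along $c_i$ that remain tangent to the leaves of $\mathcal{F}_i$) satisfies $J''+\kappa J=0$, and the corresponding volume-density $\vartheta_i(t)$ of the leaves along $c_i$ satisfies
\[
\frac{d}{dt}\log\vartheta_i(t)\Big|_{t=0} = \langle H_i(p_i), \xi_i\rangle.
\]
The task then reduces to showing that these log-derivatives agree at $t=0$.

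The key step is to argue, via O'Neill's formulas for Riemannian submersions, that since both ambient curvatures equal the same constant $\kappa$, the Gauss equation for the leaves is algebraic in the shape operator and the ambient curvature; hence the trace of the shape operator is determined by the transverse orbifold geometry together with $\kappa$. Since $\varphi$ preserves the transverse metric and $\kappa_1=\kappa_2$, this would force $\mathrm{tr}(S_{\xi_1}) = \mathrm{tr}(S_{\xi_2})$, so that $\langle H_1(p_1),\xi_1\rangle = \langle H_2(p_2),\xi_2\rangle$. Since $\xi_1$ is arbitrary, the conclusion $d\varphi(H_{1*})=H_{2*}$ follows at the regular point, and then globally by density of the regular set.

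The main obstacle I anticipate is precisely this last identification: making rigorous the claim that, in constant curvature, the trace of the shape operator is genuinely transverse data. This is where both hypotheses of the lemma enter crucially, since in variable curvature the Gauss equation would allow leaf-wise data to enter the shape operator independently of the transverse metric, and in the singular (non-regular) setting the orbifold submersion structure would break down. An alternative approach that sidesteps the O'Neill apparatus would be to compare volume-growth functions of tubular neighborhoods of leaves in $M_1$ and $M_2$ directly via a Weyl-type tube formula in constant curvature, using that $\varphi$ controls the intrinsic geometry of corresponding tubes; either route requires some care to verify that no purely leaf-wise invariants obstruct the final trace comparison.
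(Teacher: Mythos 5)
Your high-level framing---shape operators, the Jacobi equation $J''+\kappa J=0$ in constant curvature, and projected geodesics in the quotient---is exactly the framework the paper uses, so you are pointed in the right direction. However, the crucial step is missing, and the substitute you sketch would not work. You write that ``via O'Neill's formulas\dots the Gauss equation for the leaves is algebraic in the shape operator and the ambient curvature; hence the trace of the shape operator is determined by the transverse orbifold geometry together with $\kappa$.'' This inference is not valid: neither the Gauss equation for the leaves nor O'Neill's curvature formulas for the submersion express $\operatorname{tr}(S_{\xi})$ as a function of the transverse metric and $\kappa$. O'Neill's formula relates the base curvature to the ambient curvature and the $A$-tensor, not to the shape operator $S_\xi$ of the fibers; and the Gauss equation relates intrinsic leaf curvature (which is leaf-wise data, not transverse data) to $\kappa$ and the second fundamental form. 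Neither pins down the trace of $S_\xi$ from transverse data alone---indeed, as Example 1.2 in the paper shows, in general the mean curvature is \emph{not} determined by the quotient metric, so any correct argument has to use constant curvature in a sharper, non-algebraic way. You recognize this as ``the main obstacle,'' but the proposal leaves it unresolved.

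The missing idea, which is the heart of the paper's proof, is a conjugate-point argument. Fix an eigenvector $u_1$ of $S_{x_1}$ with eigenvalue $\lambda$ and build the vertical projectable Jacobi field $J_1$ along $\gamma_1(t)=\exp_{p_1}(tx_1)$ with $J_1(0)=u_1$, $J_1'(0)=-\lambda u_1$. Because $\kappa$ is constant, $J_1(t)=f(t)E(t)$ with $E$ parallel and $f$ an explicit scalar solution of $f''+\kappa f=0$, so the first zero $t_0$ of $f$ recovers $\lambda$ (e.g.\ $\lambda=\sqrt{\kappa}\cot(t_0\sqrt{\kappa})$ for $\kappa>0$). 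Since $J_1$ is vertical and projectable, $d\pi_1(J_1)$ is a Jacobi field along $\pi_1\circ\gamma_1$ vanishing at $t=0$ and $t=t_0$; that is, $t_0$ is a \emph{conjugate point} of the projected geodesic, which \emph{is} transverse data. The smooth SRF isometry $\varphi$ carries $\pi_1\circ\gamma_1$ to $\pi_2\circ\gamma_2$ and preserves conjugate points and their multiplicities (via the lift/projectable-Jacobi-field correspondence of \cite{GW2009}, Lemma 1.6.1), so each eigenvalue of $S_{x_1}$ appears with the same multiplicity in $S_{x_2}$. A separate observation handles the $\kappa=0$, $\lambda=0$ eigenvectors (no finite conjugate point, but these contribute nothing to the trace). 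Summing gives $\operatorname{tr}(S_{x_1})=\operatorname{tr}(S_{x_2})$, hence $\langle H_1,x_1\rangle=\langle H_2,x_2\rangle$, hence $d\varphi(H_{1*})=H_{2*}$. This conjugate-point bridge is precisely what converts a leaf-wise quantity into a transverse one, and it is the piece your proposal still needs.
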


\begin{proof}
Let $p_1\in M_1$ and $p_2\in M_2$. Assume further that $\bar{p}_1\in M_1/\mathcal{F}_1$ and $\bar{p}_2=\varphi(\bar{p}_1)\in M_2/\mathcal{F}_2$ denote their images in the respective quotient spaces--i.e.~$\pi_1(p_1)=\bar{p}_1$ where $\pi_1:M_1\rightarrow M_1/\mathcal{F}_1$ and similarly for $\bar{p}_2$. Let $x_i\in (V_i^\perp)_{p_i}$ denote horizontal vectors for $i=1,\,2$ and let $\bar{x}_i =d\pi_i(x_i)$ denote the images of these vectors in the quotient space. Suppose further that $\bar{x}_2:=d\varphi(\bar{x}_1).$ Let $\gamma_i(t)=exp_{p_i}(tx_i)$ for $i=1,\,2$. Finally, let $S_{x_i}$ denote the shape operators at $x_i$ for $M_i$, $i=1,\,2$.

Let $u_1\in (V_1)_{p_1}$ be the eigenvector with eigenvalue $\lambda$ for $S_{x_1}$. Recall from \cite{GW2009} that a Jacobi field $J$ defined along a curve $c$ is said to be {\it projectable} if and only if it satisfies $J'^{\bs{v}}=-S_{\dot{c}}J^{\bs{v}}-A_{\dot{c}}J^{\bs{h}}$. (Here, the superscripts denote the vertical and horizontal components of the vector fields.) And if $J$ is vertical and projectable, $J'^{\bs{v}}=-S_{\dot{c}}J.$

Now consider the Jacobi field $J_1(t)$ along $\gamma_1(t)$ defined by the following initial value problem: $J_1''(t)+\kappa J_1(t)=0,$ $J_1(0)=u_1$ and $J_1'(0)=-S_{x_1}u_1=-\lambda u_1$. Since $u_1$ is vertical, $J_1$ is vertical. The initial condition on the first derivative implies that $J_1$ is projectable, a fact which we will use shortly.

Returning to the initial value problem, we observe that $J_1(t)=f(t)E(t)$ where $E$ is the parallel field along $\gamma_1$ with $E(0)=u_1$, and $f$ satisfies the initial value problem $f''+\kappa f=0,$ $f(0)=1,$ $f'(0)=-\lambda$. By solving, we see that when $\kappa=0$, $f(t)=1-\lambda t$, and when $\kappa>0$ we have $f(t)= \cos(t\sqrt{\kappa})-\frac{\lambda}{\sqrt{\kappa}}\sin(t\sqrt{\kappa}).$

Assume for the moment that $\lambda\not=0$ if $\kappa=0$. Then $J_1(t_0)=0$ for some $t_0\in \mathbb{R}$. If $\kappa=0,$  $f(t_0)=0$ implies that $\lambda=1/t_0.$ If $\kappa>0$, then $f(t_0)=0$ imples that $\lambda=\frac{\sqrt{\kappa}}{\tan(t_0\sqrt{\kappa})}.$

Since $J_1$ is projectable, by Theorem 1.6.1 of \cite{GW2009}, we have that $d\pi_1(J_1)$ is a Jacobi field along $\pi_1(\gamma_1)$.
It follows easily that $d\varphi(d\pi_1(J_1))$ is a Jacobi field along $\varphi(\pi_1(\gamma_1(t)))=\pi_2(\gamma_2(t))$. By Lemma 1.6.1 of \cite{GW2009}, there exists a unique Jacobi $J_2$ along $\gamma_2$ with $d\pi_2(J_2)=d\varphi(d\pi_1(J_1)),$ $J_2'^{\bs{v}}(0)=-S_{x_2}J_2(0),$ and $J_2(t_0)=0$. In particular, $J_2$ must be vertical, because $J_1$ is vertical. Since $J_2(t_0)=0$, and $J_2$ is vertical, $J_2=fE_2$ for some parallel field $E_2$. Hence, $J_2'(0)=-\lambda J_2(0),$ and thus $J_2(0)$ is an eigenvector with eigenvalue $\lambda$ of $S_{x_2}$.

Now consider multiplicities. Suppose $\lambda$ is an eigenvalue of $S_{x_1}$ with multiplicity $k$. Let $E_1(\lambda)$ denote the $\lambda$ eigenspace of $S_{x_1}.$ As noted in the proof of Proposition 4.1.1 of \cite{GW2009}, for any Jacobi field $\bar{J}_1$ along the projected geodesic $\pi_1\circ \gamma_1$ with $\bar{J}_1(0)=\bar{J}_1(t_0)=0$ then, by Lemma 1.6.1 of \cite{GW2009}, there exists a unique projectable Jacobi field $J_1$ along $\gamma_1$with the $J_1(0)\in E_1(\lambda)$. This implies that $t_0$ is a conjugate point of $\pi_1\circ\gamma_1$ of multiplicity (as a conjugate point) of at most $k$. 

On the other hand, if $J_1$ is a projectable Jacobi field along $\gamma_1$ with $J_1(0)=-S_{x_1}J_1(0),$ with $J_1(t_0)=0,$ $J(0)\not=0$ then $d\pi_1(J_1)$ is a Jacobi field that vanishes at $t=0$ (because $J_1(0)\in E_1(\lambda)\subset V_1$) and at $t=t_0.$ We may assume that $d\pi_1(J_1)$ is non-trivial, since otherwise $J_1$ is vertical, and thus a holonomy field (See Definition 1.4.3, of \cite{GW2009}). Such fields vanish identically at any point; thus, the condition that $J_1(t_0)=0,$ implies that $J_1$ is trivial, a contradiction. Thus, $t_0$ is a conjugate point of multiplicity exactly $k$ for $\pi_1\circ\gamma_1$.

By an identical argument, if $\lambda$ is an eigenvalue of $S_{x_2}$ with multiplicity $k'$, then $t_0$ is a conjugate point of multiplicity exactly $k'$ for $\pi_2\circ\gamma_2$.

But, $\varphi\circ\pi_1\circ\gamma_1=\pi_2\circ\gamma_2$, and $d\varphi(d\pi_1(J_1))=d\pi_2(J_2)$. Hence, $J_1(t_0)=0$ if and only if $J_2(t_0)=0$, so conjugate points are preserved, and therefore $k=k'$, and thus the eigenvalues of $S_{x_1}$ are the same as those of $S_{x_2}$, including multiplicities. Hence, $d\varphi(H_{1*})(\bar{p}_1)=H_{2*}(\bar{p}_2)$.

If $\kappa=0$, some care must be taken with the zero eigenvalues. Suppose now that $\kappa=0$ and $\lambda=0$, then $J_1(t)\not=0$ for finite $t$. In this case, we can take $t_0=\infty$ and $\lambda=1/t_0=0.$ By the argument above, $J_2(t)\not=0$ for all $t$, and hence $J_2(0)$ will be an eigenvector of $S_{x_2}$ with eigenvalue zero.

We have now demonstrated that the non-zero eigenvalues of $S_{x_1}$ are the same as those of $S_{x_2}$, counting with multiplicities. Hence, their traces are the same, and it follows that $d\varphi(H_{1*})(\bar{p}_1)=H_{2*}(\bar{p}_2),$ and thus $d\varphi(H_{1*})=H_{2*}.$
\end{proof}

\begin{proof}[Alternative proof of Corollary \ref{cor:spaceforms}.]
Let $p_1\in M_1$ be a point belonging to the regular region, and let $x_1\in (V_1^\perp)_{p_1}$, and let $X$ be a horizontal vector field with $X_{p_1}=x_1$.  Define $p_2$ and $x_2$ similarly with $p_2=\varphi(p_1)$ and $x_2=d\varphi(X_{p_1}).$ Note that $p_2$ necessarily lies in the regular region as well because of the leaf codimension preservation condition. Let $u_1\in (V_1)_{p_1}$ be the eigenvector with eigenvalue $\lambda$ for $S_{x_1}$, as in the proof of Lemma \ref{l:regular}, and let $\gamma_1(t)=exp_{p_1}(tx_1).$ Now consider the Jacobi field $J_1(t)$ along $\gamma_1(t)$ defined by the following initial value problem: $J_1''(t)+\kappa J_1(t)=0,$ $J_1(0)=u_1$ and $J_1'(0)=-S_{x_1}u_1=-\lambda u_1$. As above, $J_1(t)$ will vanish at some $t_0\in \mathbb{R}$ unless $\kappa=0$. For the moment, we will assume that such a $t_0<\infty$ exists. If $\gamma_1(t_0)$ belongs to the regular region of $M_1$, then we may use the argument of the previous lemma to show that $d\varphi(H_{1*})(\bar{p}_1)=H_{2*}(\bar{p}_2)$.

If $\gamma_1(t_0)$ belongs to the singular strata of $M_1$ such that $\pi_1\circ\gamma_1(t)$ lies in the orbifold part of $M_1/\mathcal{F}_1$, denoted by $(M_1/\mathcal{F}_1)_{orb}$, then we will show that the non-zero eigenvalues of $S_{x_1}$ are the same as the non-zero eigenvalues of $S_{x_2}$ when $\kappa=0,$ and all eigenvalues of $S_{x_1}$ are the same as the eigenvalues of $S_{x_2}$ when $\kappa>0.$

Note that the complement of $(M_1/\mathcal{F}_1)_{orb}$ has codimension at least two, thus almost every projected horizontal geodesic stays in $(M_1/\mathcal{F}_1)_{orb}$ for all time. Further $\varphi((M_1/\mathcal{F}_1)_{orb})=(M_2/\mathcal{F}_2)_{orb},$ and $\varphi\circ\pi_1\circ\gamma_1=\pi_2\circ\gamma_2.$ From this we will conclude that the trace of $S_{x_1}$ equals the trace of $S_{x_2}$ whenever $\pi_1\circ \gamma_1$ is contained in $(M_1/\mathcal{F}_1)_{orb},$ which is open and dense. By continuity of the mean curvature form, we will then conclude that $d\varphi(H_{1*})=H_{2*}.$

We now take cases, supposing first $\kappa>0$, then $\kappa=0$.

If $\kappa>0$, then we may use the following argument from the thesis of M. Radeschi, \cite{Radeschi2012}. We will first suppose that $M_1$ is a round $n_1$-sphere of curvature $\kappa$, and $M_2$ is similarly a round $n_2$-sphere of curvature $\kappa$, each admitting singular Riemannian foliations $\mathcal{F}_1$ and $\mathcal{F}_2,$ respectively. At the conclusion of this case, we will show how to generalize the result from spheres to positive curvature space forms.

From \cite{Mol}, Lemma 6.1, we know that if $L_s$ is a singular leaf through $p_0:=\gamma_1(t_0)$, and $\sigma$ is a horizontal geodesic starting at $p_0$ that leaves the stratum containing $p_0$, then for some small $\varepsilon$ there is a neighborhood $U_{\sigma(\varepsilon)}\subset L_{\sigma(\varepsilon)},$ (contained in the regular region), and a neighborhood $U_{p_0}\subset L_s$ such that the closest-point map $\Pi: U_{\sigma(\varepsilon)}\rightarrow U_{p_0}$ is a submersion with non-trivial kernel and $d \Pi_{\sigma{\epsilon}} v = J_v(0)$ where $J_v$ is the holonomy Jacobi field along $\sigma$ such that $J_v{\epsilon}=v.$
We can then prove the following claim (generalizing Lemma 3.0.6 of \cite{Radeschi2012}) which we include for the sake of completeness of the exposition:  Let $(M,\mathcal{F})$ be a singular Riemannian foliation on a sphere $M$ of curvature $\kappa$, and suppose $L_s$ is a singular leaf, $\sigma$ is a horizontal geodesic starting at $p_0\in L_s$, and $\Pi: U_{\sigma(\varepsilon)}\rightarrow U_{p_0}$ the local submersion, then 
\begin{equation}
Ker\, d\,\Pi_{\sigma(\varepsilon)}=\Bigl\{ v\in V_{\sigma(\varepsilon)}\,|\,A^*_xv=0,\, S_xv=\frac{\sqrt{\kappa}}{\tan(\varepsilon\sqrt{\kappa})}v\Bigr\}
\end{equation}
where $A^*$ is the adjoint of the O'Neill tensor, $A,$ and $x=-\sigma'(\varepsilon).$ Observe that $Ker\, d\,\Pi_{\sigma(\varepsilon)}$ is a subspace of dimension $dim(L_{\sigma(\varepsilon)})-dim(L_s)$.

This claim is proved as follows. Let $v\in Ker\, d\Pi_{\sigma(\varepsilon)}$ and let $J(t)$ be the unique holonomy Jacobi field along $\tilde{\sigma}(t):=\sigma(\varepsilon-t)$ such that $J(0)=v$. Then, since $J(t)$ is a holonomy Jacobi field, $J'(0)= -A^*_xv-S_xv$ with $x=\tilde{\sigma}'(0)$. We may assume $J(t)=\cos(\sqrt{\kappa}t)E_1(t)+\frac{\sin(\sqrt{\kappa}t)}{\sqrt{\kappa}}E_2(t)$ where $E_1(t)$ and $E_2(t)$ are parallel fields, $E_1(0)=v$, $E_2(0)=-A^*_xv-S_xv$ since $M$ is a sphere of curvature $\kappa$. Noting that $\langle E_i(t),E_j(t)\rangle$ for $i, j\in {1,2}$ are constant, since $E_1(t)$ and $E_2(t)$ are parallel fields, one easily calculates that $\|J(t)\|^2$ goes to zero precisely when $A^*_xv=0$ and $S_xv=\frac{\sqrt{\kappa}}{\tan(\varepsilon\sqrt{\kappa})}v,$ proving the claim.

Let $E_\lambda(x_1)$ and $E_\lambda(x_2)$ denote the $\lambda$ eigenspaces of $S_{x_1}$ on $M_1$ and $S_{x_2}$ on $M_2,$ respectively. We wish to show that $E_\lambda(x_1)$ and $E_\lambda(x_2)$ have the same dimension. By the previous claim taking $\varepsilon=t_0$, there are subspaces $K_{p_1}:=E_\lambda(x_1)\cap ker A^*_{x_1}$ and $K_{p_2}:=E_\lambda(x_2)\cap ker A^*_{x_2}$. These subspaces have the same dimension precisely when $codim(L_{\gamma_1(t_0)})=codim(L_{\gamma_2(t_0)}).$ It is now enough to show that the dimensions of the quotient spaces $dim(E_\lambda(x_1)/K_{p_1})=dim(E_\lambda(x_2)/K_{p_2})$.

We do so by constructing a bijection between the quotient spaces, as in the proof of Proposition 3.0.7 of \cite{Radeschi2012}, which we include for the sake of completeness.

We begin by showing that there is a well-defined injective map
\begin{equation}\label{e:EmodK}
E_\lambda(x_1)/K_{p_1}\rightarrow E_\lambda(x_2)/K_{p_2}.
\end{equation}
The existence of a corresponding map in the other direction will follow from reversing the roles of $p_1$ and $p_2$ and using $\varphi^{-1}$ in place of $\varphi$. It is helpful to define the following. Let
\begin{eqnarray}
\mathcal{K}_1&:=&\{J_1\,|\,J_1 \text{ is a Jacobi field along }\gamma_1 \text{ with } J_1(t_0)=0\}\\
\mathcal{K}_2&:=&\{J_2\,|\,J_2 \text{ is a Jacobi field along }\gamma_2 \text{ with } J_2(t_0)=0\}.
\end{eqnarray}
Let $ev_0$ be the evaluation map at $t=0$. From the argument above, we have $ev_0(\mathcal{K}_1)=K_{p_1}$ and $ev_0(\mathcal{K}_2)=K_{p_2}.$

Now let $[v]\in E_\lambda(x_1)/K_{p_1},$ and let $v\in E_\lambda(x_1)\subset V_{p_1}$ be any representative of $[v]$. Let $J_v(t)$ be the projectable Jacobi field along $\gamma_1$ with $J_v(0)=v$, $J'_v(0)=-S_{x_1}(v)=-\lambda v.$ Consider an interval of the form $I=(t_0-\varepsilon, t_0)$ for small $\varepsilon>0$ such that $\gamma_1|_I$ is on the regular part of the foliation. On this interval $J_v$ is a projectable Jacobi field over $\gamma_1$ and we may use Theorem 1.6.1 of \cite{GW2009} to conclude that $d\pi_1(J_v(t))$ is a Jacobi field along the projected geodesic $\pi_1\circ \gamma_1$ that vanishes at $t=0$. Furthermore, $\lim_{t\to t_0}\|J_v(t)\|=0$, so it vanishes at $t_0$ as well. Note that $d\pi_1(J_v(t))$ is defined independently of the choice of representative $v$.

Consider $\gamma_2$ a geodesic in $M_2$ such that $\varphi\circ\pi_1\circ\gamma_1=\pi_2\circ\gamma_2$, restricted to $I=(t_0-\varepsilon,t_0)$. We note that $d\varphi(d\pi_1(J_v))$ is a Jacobi field along $\pi_2\circ\gamma_2$, and $\gamma_2|_I$ lies in the regular region of $M_2$. Thus, we may use Lemma 1.6.1 of \cite{GW2009} to find the projectable Jacobi field $\tilde{J}_v(t)$ that projects to $d\pi_2(\tilde{J}_v)=d\varphi(d\pi_1(J_v))$, and such that $\tilde{J}_v(t_0)=0.$ This $\tilde{J}_v$ is uniquely defined up to a Jacobi field in $\mathcal{K}_2$, and $\tilde{J}_v(0)$ is an eigenvector of $S_{x_2}$ with eigenvalue $\lambda=\frac{\sqrt{\kappa}}{\tan(\sqrt{\kappa}t_0)},$ which is well-defined up to an element of $K_{p_2}=ev_0(\mathcal{K}_2)$. This defines the map $[v]\mapsto[\tilde{J}_v(0)]$ in \eqref{e:EmodK}.

Note by assumption the codimensions of the corresponding leaves are the same. By the above argument, we see that the corresponding dimensions of the eigenspaces are the same. Since the eigenvectors span the tangent spaces of the leaves, we see that $n_1=n_2.$
Now suppose that $M_1=S^{n_1}/\Gamma_1$ and $M_2=S^{n_2}/\Gamma_2$ are space forms of the same positive constant curvature $\kappa$, where $\Gamma_1$ and $\Gamma_2$ are finite subsets of the isometries of $S^{n_1}$ and $S^{n_2}$, respectively, acting properly discontinuously.  Let $\tilde{p}_i: S^{n_i}\rightarrow M_i$ be the usual covering maps, for $i=1, \,2$. By means of Lemma \ref{cor:coveringspace} applied to each covering map, $\varphi: M_1/\mathcal{F}_1\rightarrow M_2/\mathcal{F}_2$ lifts to a smooth SRF isometry $\tilde{\varphi} :S^{n_1}/\tilde{\mathcal{F}}_1\rightarrow S^{n_2}/\tilde{\mathcal{F}}_2$ by $\tilde{\varphi}=\tilde{p}_2^{-1}\circ \varphi \circ\tilde{p}_1,$ and we may conclude as in the preceding paragraph that $n_1=n_2.$ By Lemma \ref{cor:coveringspace}, the map $\tilde{\varphi}$ preserves the mean curvature vector field if and only if $\varphi$ does. By the previously covered case for spheres, $\tilde{\varphi}$ preserves mean curvature, and we have the desired result for these space forms as well.

If $\kappa=0$, then all the non-zero eigenvalues of $S_{x_1}$ are the same as the non-zero eigenvalues of $S_{x_2}$ and have the same multiplicities for $M_1=\mathbb{R}^{n_1}$ and $M_2=\mathbb{R}^{n_2}$ by Proposition 3.1 of \cite{AR2015}. 

Suppose now $M_1=\mathbb{R}^{n_1}/\Gamma_1$ and $M_2=\mathbb{R}^{n_2}/\Gamma_2$ where $\Gamma_1$ and $\Gamma_2$ are finite subsets of the isometries of the appropriate Euclidean spaces, acting properly discontinuously. The covering map argument of the $\kappa>0$ case can then be modified to show the result follows for compact curvature zero space forms as well. \end{proof}
\end{section}
\section*{References}

\bibliographystyle{alpha} 

\begin{thebibliography}{9}



\bibitem{AS2017} I. Adelstein, M. R. Sandoval. The $G$-invariant spectrum and non-orbifold singularities, Arch. Math., 109, 563--573 (2017).
\bibitem{AL2011} M. Alexandrino, A. Lytchak. On smoothness of isometries between orbit spaces, Riemannian Geometry and Applications, In: Proceedings RIGA Ed. Univ. Bucuresti (2011), 17--28.
\bibitem{AR2015} M. Alexandrino, M. Radeschi. Isometries between leaf spaces, Geom. Dedicata, 174, 193--201 (2015).
\bibitem{AR2017} M. Alexandrino, M. Radeschi. Closure of singular foliations: the proof of Molino's conjecture, Compos. Math., 153, 2577-2590 (2017).
\bibitem{AR2016b} M. Alexandrino. M. Radeschi. Mean curvature flow of singular Riemannian foliations, J. Geom. Anal., 26, 2204-2220(2016).
\bibitem{BZ2007} A. V. Bagaev, N. I. Zhukova. The isometry group of Riemannian orbifolds, Siberian Math. J., 48, 579--592.
\bibitem{GL2015} C. Gorodski, A. Lytchak. Isometric group actions with an orbifold quotient, Math. Ann., 365, 1041--1067(2016).

\bibitem{GW2009} D. Gromoll, G. Walschap. Metric Foliations and Curvature, Progress in Mathematics 268. Birkhauser, Basel, 2009.
\bibitem{Lytchak} A. Lytchak. Singular Riemannian foliations on spaces without conjugate points, Differential Geometry, World Scientific, Hackensack, NJ, 2009.
\bibitem{LT2010} A. Lytchak, G. Thorbergsson. Curvature explosion in quotients and applications, J. Differential. Geom., 85, 117-139 (2010).

\bibitem{MiWol2006} V. Miquiel, R. A. Wolak. Minimal singular Riemannian foliations, C. R. Acad. Sci. Paris, Ser. I, 342, 33--36 (2006).
\bibitem{Mol} P. Molino, Riemannian Foliations, Birkhauser, Boston, 1988.
\bibitem{Pac} T. Pacini. Mean curvature flow, orbits, moment maps, Trans. Amer. Math. Soc., 355 (2003), 3343--3357.
\bibitem{Radeschi2012} M. Radeschi, Low dimensional singular Riemannian foliations in spheres, Ph. D. Dissertation, University of Pennsylvania, 2012.
\bibitem{Ri2} K. Richardson. The transverse geometry of $G$-Manifolds and Riemannian foliations, Illinois J. Math., 45, 517--535 (2001).

\bibitem{Zhegib} A. Zhegib. Subsystems of Anosov systems, Am. J. Math., 117, 1431--1448 (1995).





\end{thebibliography}

\end{document}